\newcommand{\HE}{Namie of Handling Editor}
\newcommand{\DoS}{Month/Day/Year}
\newcommand{\DoA}{Month/Day/Year}
\newcommand{\CA}{Matja\v{z} Omladi\v{c}}
\newcommand{\Names}{Bojan Kuzma, Mitja Mastnak, Heydar Radjavi, and  Matja\v{z} Omladi\v{c}}
\newcommand{\Title}{Matrix groups}
\newtheorem{remark}[theorem]{Remark}
\newcommand{\reals}{\mathbb{R}}
\newcommand{\complex}{\mathbb{C}}
\begin{document}

\bibliographystyle{plain}

\setcounter{page}{1}

\thispagestyle{empty}

 \title{ On approximate and actual reducibility of matrix groups\thanks{Received
 by the editors on \DoS.
 Accepted for publication on \DoA. 
 Handling Editor: \HE. Corresponding Author: \CA.}}

\author{
Bojan Kuzma, \thanks{ Faculty of Mathematics, Natural Sciences and Information Technologies, University of Primorska, Koper, Slovenia (bojan.kuzma@famnit.upr.si). Supported in part by the Slovenian Research Agency, research program P1-0285 and research project N1-0210. }
\and
Mitja Mastnak, \thanks{ Department of Mathematics, Saint Mary's University, Halifax, Canada (mmastnak@cs.smu.ca). Supported by a grant from NSERC, Canada. }
\and
Matja\v{z} Omladi\v{c}, \thanks{ Department of Mathematics, Institute of Mathematics Physics and Mechanics, Ljubljana, Slovenia (matjaz@omladic.net). }
\and
Heydar Radjavi, \thanks{ Department of Pure Mathematics, University of Waterloo, Canada (hradjavi@uwaterloo.ca). }
}

\markboth{\Names}{\Title}

\maketitle

\begin{abstract}
We introduce the notions of $\varepsilon$-approximate fixed point and weak $\varepsilon$-approximate fixed point. We show that for a group of unitary matrices even the existence of a nontrivial weak $\varepsilon$-approximate fixed point for sufficiently small $\varepsilon$ gives an actual nontrivial common eigenvector. We give estimates for $\varepsilon$ in terms of the size $n$ of matrices and prove that the dependence is polynomial. Moreover, we show that the common eigenvector is polynomially close to the starting weak approximate fixed point. 
\end{abstract}

\begin{keywords}
Groups of unitary matrices, Fixed points, Approximate fixed points, Reducibility, Haar measure, Monomial groups, Connected groups.
\end{keywords}

\begin{AMS}
15A30, 20G99, 47D03. 
\end{AMS}

\section{Introduction} \label{intro-sec}
Let $\mathcal{G}$ be a group of complex matrices. An approximate fixed point $\xi\in\complex^n$ is defined to be a nonzero  vector for which the norm $\|G\xi-\xi\|$ is uniformly small for all $G\in\mathcal{G}$ in an appropriate sense. For a general group, given $\varepsilon >0$, one demands that $\|G\xi-\xi\| \leqslant\|G\|\, \|\xi\|\,\varepsilon$, for all $G\in\mathcal{G}$. (In this paper $\|\cdot\|$ will always mean $\|\cdot\|_2$ unless stated otherwise.) Since we will be concerned with groups of unitary matrices, we might as well assume $ \|G\|= \|\xi\|=1$ and simplify our definitions. The above definition then is equivalent to the requirement that the   inner product $\langle G\xi,\xi\rangle$ be uniformly close to 1. An obviously weaker condition is to require that the modulus $|\langle
G\xi,\xi\rangle|$ be uniformly close to 1. That kind of $\xi$ will then be called a weak approximate fixed point. The problem of determining groups for which the existence of weak approximate fixed points in the above sense implies the existence of a nontrivial invariant subspace of $\complex^n$ for $\mathcal{G}$ (i.e., reducibility of $\mathcal{G}$) doesn't seem to be easy. In this paper we impose additional hypotheses on $\mathcal{G}$ to get affirmative results.

\section{Preliminaries}
Let an integer $n>0$, a real number $\varepsilon>0$ and a group $\mathcal{G}\subseteq M_n(\complex)$ be given  (all the groups in this paper will consist of invertible complex matrices of order~$n$ so that they may be seen as subgroups of the general linear group). 

We call a point $\xi\in\complex^n$ of norm one {\emph{a fixed point of~$\mathcal{G}$} if $G\xi=\xi$ for each $G\in\mathcal{G}$, and} we call it an \emph{$\varepsilon$-approximate fixed point} of $\mathcal{G}$ if $\|G\xi-\xi\| \leqslant \varepsilon$ for all $G\in \mathcal{G}$. When $\varepsilon$ is understood from the context, we call $\xi$ shortly an \emph{approximate fixed point} of the group $\mathcal{G}$. {Recall that the existence of an approximate fixed point is closely related to Kazhdan property~$T$  introduced in a seminal paper of Kazhdan~\cite{Kazhd}; we refer to~\cite{Bek-Harp-Val} for a historical background and a wealth of information on the topic. In particular (see~\cite[Proposition 1.1.5]{Bek-Harp-Val}), if a compact unitary subgroup has an $\varepsilon<\sqrt{2}$ approximate fixed point~$\xi$, then it has a fixed point and we have the estimate $\|\xi-P\xi\|\le 1$ where $P$ is the  projection to the span of all fixed points (see~\cite[Proposition 1.1.9]{Bek-Harp-Val}). Clearly, $P\xi$ is a common eigenvector of ${\mathcal G}$, but note that in general its norm is less than one, so it does not qualify as a fixed point according to our definition above. }

We call $\xi\in\complex^n$ of norm one a \emph{weak  $\varepsilon$-approximate fixed point} of $\mathcal{G}$ if {$0<\varepsilon<1$ and}
\begin{equation}\label{weak appr fixed}
    |\langle G\xi,\xi \rangle| \geqslant 1- \varepsilon
\end{equation}
for all $G\in \mathcal{G}$. 
Again, when $\varepsilon$ is understood from the context, we call $\xi$ a \emph{weak approximate fixed point} of the group $\mathcal{G}$. The condition $\|\xi\|=1$ will be tacitly assumed for any fixed point $\xi$ in this paper in connection with either of these definitions, approximate or not, in any of the claims to follow.

The main result of our paper is that for $0<\varepsilon<\frac{1}{3600 n^{11}}$, any unitary subgroup $\mathcal{G}$ of $M_n(\complex)$ with a weak $\varepsilon$-approximate fixed point $\xi$ has a {fixed point} $\eta$ that is within $3600 n^{11} \varepsilon$ of $\xi$ (see Theorem \ref{main}).
The existence of an $\varepsilon>0$ for which existence of weak $\varepsilon$-approximate fixed points implies reducibility for unitary subgroups of $M_n(\complex)$ follows from a significantly more general result from \cite{KuMaOmRa}. There the authors study those  continuous multi-variable functions $$f\colon M_n(\complex)\times \dots \times M_n(\complex)\to [0,\infty),$$ that are reducing for unitary groups (in the sense that any unitary group $\mathcal{G}$ on which $f$ is identically $0$ is reducible).  They show, that for any fixed $n$, and for any such function $f$, there is an $\varepsilon>0$ such that any unitary group $\mathcal{G}$ on which $f$ is bounded by $\varepsilon$ 
is also reducible.  The main ingredient of establishing this is applying the theory of collections of nonempty compact subsets of compact metric spaces equipped with the Hausdorff topology to the metric space of complex matrices with the usual topology. 
The just mentioned result easily implies, that, for any $n$, there is an $\epsilon>0$ such that any unitary subgroup $\mathcal{G}$ of $M_n(\complex)$ with a weak $\varepsilon$-approximate fixed point $\xi$ has a common eigenvector $\eta$. However, the techniques  of \cite{KuMaOmRa} cannot yield any estimate of the size of $\varepsilon$ (they cannot even guarantee that $\varepsilon$ will depend on $\frac{1}{n}$ polynomially), nor do they guarantee that the common eigenvector $\eta$ will be polynomially close to the starting weak approximate fixed point $\xi$.

The paper is organized as follows. In Section~\ref{sec:comm} we study finite groups generated by their commutators. We show, in particular, that if every element of the group is a product of a commutator and a scalar, the existence of a weak $\varepsilon$-fixed point for sufficiently small $\varepsilon$ implies reducibility (Theorem~\ref{product with scalar}). In Section \ref{sec:mon} we study monomial groups and prove in Theorem~\ref{monomial group} that a monomial group of unitary matrices having a weak $\varepsilon$-approximate fixed point has a common eigenvector provided that $\varepsilon$ is of the order of magnitude $n^{-11}$. (Recall that all our eigenvectors, common, approximate or otherwise, will be of norm 1.) Some important results on decompositions of unitary groups are presented in Section~\ref{sec:decomp}, and in Section~\ref{sec:gen} we combine the results of the previous sections to give an analogous result for a general finite group of unitaries. In Section~\ref{sec:con} we present the desired result for connected compact groups using a well-known result on connected compact Lie groups. Then we use the fact that the connected component of the identity of a compact group has finite index to extend our finite-group results to arbitrary compact groups of unitaries via a simple device. Finally, in Section~\ref{sec:eigenvec} we present the main result (Theorem~\ref{main}) mentioned above.

We will make frequent use of the properties of the elements of the group as matrices and will consequently prefer to see our groups as subsets of $M_n(\complex)$. We will also use the fact that these matrices act as operators on the underlying space $\complex^n$ which will be equipped with the usual Hilbert space norm, i.e.\ inner-product norm. As a matter of fact, most of our groups will consist of unitary matrices and this general assumption will make our theory work.

\section{ Commutator groups and fixed points}\label{sec:comm}

{Let us  start with} a well-known fact for which we give a simple proof for the sake of completeness.

\begin{proposition}\label{fixed_point}
Assume that a compact group $\mathcal{G}\subseteq
M_n(\complex)$ of unitaries has an $\varepsilon$-approximate
fixed point $\xi$
.  Then $\mathcal{G}$  has  a fixed point $\eta\not=0$ such that $||\xi-\eta|| \leqslant \varepsilon $.
\end{proposition}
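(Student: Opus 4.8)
The plan is to use the classical averaging trick over the group. Since $\mathcal{G}$ is a compact group of unitary matrices, it is a compact topological group and therefore carries a unique normalized Haar (probability) measure $\mu$, which is simultaneously left-invariant, right-invariant, and invariant under the inversion $G\mapsto G^{-1}$. I would set
\[
\eta \;:=\; \int_{\mathcal{G}} G\xi \, d\mu(G)\;\in\;\complex^n,
\]
a Bochner integral that is well defined because the map $G\mapsto G\xi$ is continuous on the compact set $\mathcal{G}$.

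The verification then splits into three short steps. First, $\eta$ is fixed by $\mathcal{G}$: for every $H\in\mathcal{G}$, left-invariance of $\mu$ gives $H\eta=\int_{\mathcal{G}} HG\xi\,d\mu(G)=\int_{\mathcal{G}} G\xi\,d\mu(G)=\eta$, so $G\eta=\eta$ for all $G\in\mathcal{G}$. Second, $\eta$ is close to $\xi$: since $\mu$ is a probability measure and $\|G\xi-\xi\|\leqslant\varepsilon$ for all $G$, pulling the norm inside the integral yields $\|\xi-\eta\|=\bigl\|\int_{\mathcal{G}}(\xi-G\xi)\,d\mu(G)\bigr\|\leqslant\int_{\mathcal{G}}\|\xi-G\xi\|\,d\mu(G)\leqslant\varepsilon$. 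Third, $\eta\neq 0$: indeed $\|\eta\|\geqslant\|\xi\|-\|\xi-\eta\|\geqslant 1-\varepsilon$, which is positive once $\varepsilon<1$ — the only regime of interest, and the one always in force in our applications.

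I do not expect a serious obstacle; the only points needing a little care are the existence and invariance properties of the Haar measure on $\mathcal{G}$ and the (routine) interchange of norm and integral for the Bochner integral. If one prefers to avoid Bochner integration, the argument can equally be phrased operator-theoretically: the operator $P:=\int_{\mathcal{G}} G\,d\mu(G)$ satisfies $P^2=P$ by invariance of $\mu$ and $P^*=P$ because $G\mapsto G^*=G^{-1}$ preserves $\mu$, so $P$ is the orthogonal projection onto the fixed subspace $\{v:Gv=v\text{ for all }G\in\mathcal{G}\}$; then $\eta=P\xi$, and the Pythagorean identity $1=\|\xi\|^2=\|P\xi\|^2+\|(I-P)\xi\|^2$ together with $\|(I-P)\xi\|=\|\xi-\eta\|\leqslant\varepsilon$ even gives the sharper bound $\|\eta\|\geqslant\sqrt{1-\varepsilon^2}$.
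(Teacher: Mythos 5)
Your proof is essentially the same as the paper's: both define $\eta=\int_{\mathcal{G}}G\xi\,d\mu$ using the normalized Haar measure, use left-invariance to show $\eta$ is fixed, and bound $\|\xi-\eta\|\leqslant\varepsilon$ (the paper phrases the norm bound via $\sup_{\|\vartheta\|\leqslant 1}|\langle\eta-\xi,\vartheta\rangle|$ while you pull the norm through the Bochner integral, but these are equivalent). Your explicit check that $\eta\neq 0$ via $\|\eta\|\geqslant 1-\varepsilon$ (under the standing assumption $\varepsilon<1$) is a small but welcome addition that the paper leaves implicit.
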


\begin{proof}
  Let $\mu$ be the Haar measure on $\mathcal{G}$   and
define
\[
    \eta
    =\int_{G\in \mathcal{G}}G\xi\,d\mu.
\]
Using standard arguments and the fact that the Haar measure is left invariant, we see that $G\eta=\eta$ for all $G\in \mathcal{G}$, so that $\eta$ is a fixed point. Use the fact that $\mu$ is a positive measure with $\mu(\mathcal{G})=1$ to get
\[
    \|\eta-\xi\| = \sup_{\|\vartheta\|\leqslant1}|\langle\eta-\xi,\vartheta\rangle|= \sup_{\|\vartheta\|\leqslant1}\left|\int_{G\in \mathcal{G}}\langle G\xi-\xi,\vartheta\rangle\,d\mu\right|\leqslant \varepsilon
\]
as desired.
\end{proof}

Observe that the assumption ``of unitaries'' is not necessary in the result above. This is because the existence of the Haar measure is assured solely by the assumption that $\mathcal{G}$ be compact. As a matter of fact we may only assume it is bounded. We can close a bounded group to make it compact, and using the Haar measure we can find a similarity after which all members of the group are unitary operators; we will omit the proof since all this is well known. However, all these assertions are true only up to simultaneous similarity, while one of our main points is to determine the $\varepsilon$ as a polynomial function of $n$. And this point can be studied only if the groups are assumed unitary upfront.


We will now show that a weak approximate fixed point may always
be seen as an approximate eigenvector for all elements of the
group. So, let $\mathcal{G}\subseteq M_n(\complex)$ be a
compact group of unitaries and let $\xi\in\complex^n$ of norm
one be a \emph{weak approximate fixed point} of $\mathcal{G}$. We want to show that this vector is an approximate eigenvector of every element of the group with the corresponding   eigenvalue depending on
the element. Introduce for every $G\in\mathcal{G}$ the scalar
\begin{equation}\label{appr eigen vector}
    \lambda_G=\frac{\langle G\xi,\xi \rangle}{|\langle G\xi,\xi \rangle|}.
\end{equation}
Observe that $|\lambda_G|=1$ and $\lambda_{G^*}=\overline{\lambda}_G$ since $\langle G^*\xi,\xi \rangle=\langle \xi,G\xi \rangle = \overline{\langle G\xi,\xi \rangle}$.

\begin{proposition}\label{eigenvector}
Let $n\in\mathbb{N}$ and $\varepsilon>0$ be given. If $\xi\in\complex^n$ 
is a weak $\varepsilon$-approximate fixed point of a compact group $\mathcal{G}\subseteq M_n(\complex)$ of unitaries, then
\[
    \|G\xi-\lambda_G\xi\|\leqslant\sqrt{2\varepsilon}\ \ \ \hbox{for any}\ \ \ G\in\mathcal{G}.
\]
\end{proposition}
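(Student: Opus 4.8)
The statement is purely computational, so the plan is to expand the squared norm $\|G\xi-\lambda_G\xi\|^2$ and use the two facts already at our disposal: that every $G\in\mathcal G$ is unitary (hence $\|G\xi\|=\|\xi\|=1$) and that $|\lambda_G|=1$ with $\overline{\lambda_G}\langle G\xi,\xi\rangle=|\langle G\xi,\xi\rangle|$ by the very definition \eqref{appr eigen vector} of $\lambda_G$.

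Concretely, I would first write
\[
\|G\xi-\lambda_G\xi\|^2 = \|G\xi\|^2 - 2\,\mathrm{Re}\big(\overline{\lambda_G}\langle G\xi,\xi\rangle\big) + |\lambda_G|^2\|\xi\|^2 .
\]
Since $G$ is unitary and $\|\xi\|=1$, the first and last terms are each equal to $1$. For the cross term, note that
\[
\overline{\lambda_G}\,\langle G\xi,\xi\rangle = \frac{\overline{\langle G\xi,\xi\rangle}}{|\langle G\xi,\xi\rangle|}\,\langle G\xi,\xi\rangle = |\langle G\xi,\xi\rangle| \ge 0 ,
\]
so $\mathrm{Re}\big(\overline{\lambda_G}\langle G\xi,\xi\rangle\big)=|\langle G\xi,\xi\rangle|$. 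Hence $\|G\xi-\lambda_G\xi\|^2 = 2 - 2|\langle G\xi,\xi\rangle|$.

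Finally I would invoke the weak $\varepsilon$-approximate fixed point inequality \eqref{weak appr fixed}, namely $|\langle G\xi,\xi\rangle|\ge 1-\varepsilon$, to conclude $\|G\xi-\lambda_G\xi\|^2 \le 2-2(1-\varepsilon)=2\varepsilon$, and then take square roots. Since this holds for every $G\in\mathcal G$, the claim follows. There is no real obstacle here: the only point requiring a moment's care is the simplification of the cross term $\overline{\lambda_G}\langle G\xi,\xi\rangle$ to the nonnegative real number $|\langle G\xi,\xi\rangle|$, which is exactly why the factor $\lambda_G$ was introduced in the first place. (One should also remark in passing that $\lambda_G$ need not be an eigenvalue of $G$; the conclusion only says $\xi$ is an approximate eigenvector of $G$ for the "phase" $\lambda_G$.)
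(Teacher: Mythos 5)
Your proof is correct and is essentially identical to the paper's: both expand $\|G\xi-\lambda_G\xi\|^2$, use unitarity and $\|\xi\|=1$ to evaluate the quadratic terms, use the definition of $\lambda_G$ to turn the cross term into $|\langle G\xi,\xi\rangle|$, and then apply the weak approximate fixed point inequality. You just spell out a couple of intermediate steps that the paper leaves implicit.
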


 \begin{proof}
   A straightforward computation reveals
\begin{align*}
    \|G\xi-\lambda_G\xi\|^2 &= \langle G\xi-\lambda_G\xi, G\xi-\lambda_G\xi\rangle \\
    &= \langle G\xi,G\xi\rangle-2\mathbb{R}\mathrm{e}(\langle G\xi,\lambda_G\xi\rangle)+ \langle\lambda_G\xi,\lambda_G\xi\rangle \\
    &= 2-2|\langle G\xi,\xi \rangle|\leqslant2\varepsilon.
\end{align*}
 \end{proof}

\begin{theorem}\label{commutators} Let $n\in\mathbb{N}$ and $\varepsilon>0$ be given. Assume that every element of a group of unitaries $\mathcal{G} \subseteq M_n(\complex)$ is a commutator of two elements of $\mathcal{G}$. If $\xi\in\complex^n$ 
is a weak $\varepsilon$-approximate fixed point of $\mathcal{G}$, then $\xi$ is also an $\varepsilon'$-approximate fixed point of this group for $\varepsilon'= 4 \sqrt{2\varepsilon}$.
\end{theorem}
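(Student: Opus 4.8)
The plan is to feed Proposition~\ref{eigenvector} into the commutator structure. By that proposition the weak $\varepsilon$-approximate fixed point $\xi$ is a genuine approximate eigenvector, with error at most $\delta:=\sqrt{2\varepsilon}$, for \emph{every} element of $\mathcal{G}$; in particular this holds for $A$, $B$, $A^{-1}$ and $B^{-1}$ whenever $A,B\in\mathcal{G}$, since $\mathcal{G}$ is a group. The key algebraic observation is that the product of the four associated approximate eigenvalues collapses to $1$: because $\mathcal{G}$ consists of unitaries we have $A^{-1}=A^{*}$, so $\lambda_{A^{-1}}=\lambda_{A^{*}}=\overline{\lambda_A}$ and likewise $\lambda_{B^{-1}}=\overline{\lambda_B}$, whence $\lambda_A\lambda_B\lambda_{A^{-1}}\lambda_{B^{-1}}=|\lambda_A|^{2}|\lambda_B|^{2}=1$.

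First I would record the elementary telescoping estimate: if $U_1,\dots,U_k\in\mathcal{G}$, then $\|U_1\cdots U_k\,\xi-\lambda_{U_1}\cdots\lambda_{U_k}\,\xi\|\le k\delta$. This follows by induction on $k$, peeling off one factor at a time: apply the unitary $U_1$ (which is norm preserving) to the inductive estimate for the word $U_2\cdots U_k$, and then replace $U_1\xi$ by $\lambda_{U_1}\xi$; the cost of the second step is $|\lambda_{U_2}\cdots\lambda_{U_k}|\cdot\|U_1\xi-\lambda_{U_1}\xi\|=\|U_1\xi-\lambda_{U_1}\xi\|\le\delta$, using that $|\lambda_{U_j}|=1$ for each $j$. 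Two applications of the triangle inequality at each stage give the claimed bound.

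Now take an arbitrary $G\in\mathcal{G}$ and write it, by hypothesis, as a commutator $G=ABA^{-1}B^{-1}$ with $A,B\in\mathcal{G}$. Applying the telescoping estimate with $k=4$ to the word $(A,B,A^{-1},B^{-1})$ and then invoking $\lambda_A\lambda_B\lambda_{A^{-1}}\lambda_{B^{-1}}=1$ yields $\|G\xi-\xi\|\le 4\delta=4\sqrt{2\varepsilon}=\varepsilon'$. Since $G\in\mathcal{G}$ was arbitrary, $\xi$ is an $\varepsilon'$-approximate fixed point of $\mathcal{G}$, as claimed.

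There is no real analytic obstacle here: the argument is essentially bookkeeping with the triangle inequality, and the only point requiring care is keeping track of which unitary and which unimodular scalars act at each stage, so that every intermediate replacement indeed costs at most $\delta$. The single genuine idea — that the four approximate eigenvalues of $A$, $B$, $A^{-1}$, $B^{-1}$ multiply to $1$ — is exactly where both hypotheses are consumed: ``$\mathcal{G}$ unitary'' (so that $A^{-1}=A^{*}$ and hence $\lambda_{A^{-1}}=\overline{\lambda_A}$) and ``$G$ a commutator'' (so that the $\lambda$'s cancel in conjugate pairs).
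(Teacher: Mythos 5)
Your proof is correct and follows essentially the same route as the paper's: the paper also deduces Theorem~\ref{commutators} from Proposition~\ref{eigenvector} by a four-step telescoping estimate on $G=ABA^*B^*$, using $\lambda_{A^*}=\overline{\lambda_A}$ so that the four unimodular scalars cancel. The only cosmetic difference is that you extract the general $k$-term telescoping inequality and peel factors from the left, while the paper writes out the four steps explicitly, peeling from the right.
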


Recall that an element of the form $[G,H]=GHG^{-1}H^{-1}$ is called a \emph{commutator} for any $G,H\in\mathcal{G}$; since the group under consideration consists of unitary matrices we may also compute the commutator as $[G,H]=GHG^*H^*$. Of course, not every group of matrices (even if finite) has the property that all of its elements are commutators, the smallest nonabelian counterexample is the group $S_3$, seen as a group of permutation matrices in $M_3(\complex)$. The smallest counterexample of an abstract perfect group (a group that equals its derived group, i.e.\ the group generated by its commutators) in which there are elements that are not commutators, is the group  $G=(C_2 \times C_2 \times C_2 \times C_2) \rtimes A_5$ of size $960$ (cf.\ \cite{GAP}).\\


Observe that Theorem \ref{commutators} is a simple corollary of the following lemma.
\begin{lemma}\label{referee}
  Let $G\in\mathcal{G}$ be a commutator and $\xi\in\complex^n$ 
be a weak $\varepsilon$-approximate fixed point of $\mathcal{G}$. Then
\[
    \|G\xi-\xi\|   \leqslant 4\sqrt{2\varepsilon}.
\]
\end{lemma}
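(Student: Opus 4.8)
The plan is to use Proposition~\ref{eigenvector} as the engine: it says that $\xi$ is a genuine approximate eigenvector of \emph{every} group element, with error controlled uniformly by $\sqrt{2\varepsilon}$ and with unimodular eigenvalues $\lambda_H$. Write $G=[A,B]=ABA^*B^*$ with $A,B\in\mathcal{G}$. I would peel off the four factors of $G$ one at a time, starting from the right: replace $B^*\xi$ by $\lambda_{B^*}\xi=\overline{\lambda_B}\,\xi$, then $A^*\xi$ by $\overline{\lambda_A}\,\xi$, then $B\xi$ by $\lambda_B\xi$, and finally $A\xi$ by $\lambda_A\xi$. Each replacement is licensed by Proposition~\ref{eigenvector} together with the identity $\lambda_{H^*}=\overline{\lambda_H}$ recorded just before it.

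The point that prevents any blow-up is that all intermediate operators are unitary and the scalars $\lambda_A,\lambda_B$ are unimodular: at each of the four steps the discarded error vector has norm at most $\sqrt{2\varepsilon}$, and pushing it through the remaining unitary factors and multiplying by unimodular scalars does not change its norm. After the four substitutions the leading term becomes $\overline{\lambda_B}\,\overline{\lambda_A}\,\lambda_B\,\lambda_A\,\xi=|\lambda_A|^2|\lambda_B|^2\,\xi=\xi$, so $G\xi$ equals $\xi$ plus a sum of four vectors each of norm at most $\sqrt{2\varepsilon}$, and the triangle inequality gives $\|G\xi-\xi\|\le 4\sqrt{2\varepsilon}$.

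There is no real obstacle here beyond careful bookkeeping: one must track which unitary and which unimodular scalar each error term gets multiplied by so that the bound $\sqrt{2\varepsilon}$ survives at every stage. Theorem~\ref{commutators} is then immediate from the lemma: if every element of $\mathcal{G}$ is a commutator, applying the lemma to each $G\in\mathcal{G}$ yields $\|G\xi-\xi\|\le 4\sqrt{2\varepsilon}=:\varepsilon'$ uniformly, i.e.\ $\xi$ is an $\varepsilon'$-approximate fixed point of $\mathcal{G}$.
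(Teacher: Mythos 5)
Your proposal is correct and follows exactly the paper's argument: peel off $B^*,A^*,B,A$ from the right using Proposition~\ref{eigenvector} at each of the four stages, using unitarity and unimodularity of the $\lambda$'s so that each error vector retains norm at most $\sqrt{2\varepsilon}$, and the leading scalar collapses to $1$. This is the same telescoping triangle-inequality proof the paper gives.
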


 \begin{proof}
   Write a commutator $G\in\mathcal{G}$ as $G=[A,B]=ABA^*B^*$ for some $A,B\in\mathcal{G}$. We estimate $\|G\xi-\xi\|$ using Proposition \ref{eigenvector}:
\begin{align*}
    \|G\xi-\xi\| &\leqslant\| ABA^*(B^*-\overline{\lambda}_BI_n)\xi\|+\|ABA^*\overline{\lambda}_B\xi-\xi\| \\
    &\leqslant \sqrt{2\varepsilon}+ \| AB\overline{\lambda}_B(A^*-\overline{\lambda}_AI_n)\xi\|+ \|AB\overline{\lambda}_B\overline{\lambda}_A\xi-\xi\| \\
    &\leqslant 2\sqrt{2\varepsilon}+ \| A\overline{\lambda}_B\overline{\lambda}_A(B-\lambda_BI_n)\xi\|+ \|A\overline{\lambda}_A\xi-\xi\| \leqslant 4\sqrt{2\varepsilon}.
\end{align*}
 \end{proof}

The following corollary is a consequence of Theorem \ref{commutators} and Proposition \ref{fixed_point}.

\begin{corollary}\label{bla}
Given $n\in\mathbb{N}$, let every element of a compact group of unitaries $\mathcal{G}\subseteq M_n(\complex)$ be a commutator of two elements of $\mathcal{G}$. Let $0<\varepsilon<\dfrac{1}{32}$. If the group has a weak $\varepsilon$-approximate fixed point $\xi$, 
then it has a fixed point $\eta\not=0$ satisfying $||\eta-\xi|| \leqslant 4 \sqrt{2\varepsilon}$.\\
\end{corollary}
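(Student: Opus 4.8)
The plan is to simply chain together the two results already established in the paper. Corollary~\ref{bla} assumes that $\mathcal{G}$ is a compact group of unitaries in which every element is a commutator, that $0<\varepsilon<\tfrac{1}{32}$, and that $\mathcal{G}$ has a weak $\varepsilon$-approximate fixed point $\xi$; the goal is to produce a genuine fixed point $\eta\neq 0$ with $\|\eta-\xi\|\leqslant 4\sqrt{2\varepsilon}$.

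The first step is to invoke Theorem~\ref{commutators}: since every element of $\mathcal{G}$ is a commutator and $\xi$ is a weak $\varepsilon$-approximate fixed point, $\xi$ is automatically an $\varepsilon'$-approximate fixed point of $\mathcal{G}$ with $\varepsilon'=4\sqrt{2\varepsilon}$. The second step is to feed this into Proposition~\ref{fixed_point}, which applies because $\mathcal{G}$ is compact and unitary: it yields a nonzero fixed point $\eta$ with $\|\xi-\eta\|\leqslant\varepsilon'=4\sqrt{2\varepsilon}$, which is exactly the claimed conclusion. One should note in passing why the hypothesis $\varepsilon<\tfrac{1}{32}$ is there: it is not needed for Theorem~\ref{commutators} or Proposition~\ref{fixed_point} per se, but it is the natural threshold guaranteeing that $\varepsilon'=4\sqrt{2\varepsilon}<1$ (indeed $<\sqrt 2$), so that the approximate fixed point is ``nontrivial'' in the sense that it genuinely forces a common eigenvector rather than being vacuous — consistent with the $\varepsilon<\sqrt 2$ discussion surrounding \cite[Proposition~1.1.5]{Bek-Harp-Val} in the preliminaries.

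There is essentially no obstacle here: the corollary is a formal composition of two already-proved statements, and the only thing to check is that the hypotheses line up (compactness and unitarity are inherited verbatim; the commutator hypothesis is exactly what Theorem~\ref{commutators} needs; and the bound on $\varepsilon$ only makes the conclusion meaningful). If anything, the ``work'' is purely bookkeeping of the constant: tracking that the $\varepsilon'$ delivered by Theorem~\ref{commutators} is passed unchanged through Proposition~\ref{fixed_point} to give the distance estimate $\|\eta-\xi\|\leqslant 4\sqrt{2\varepsilon}$. I would therefore write the proof in two or three sentences, citing Theorem~\ref{commutators} and then Proposition~\ref{fixed_point}, and remark that the restriction $\varepsilon<\tfrac{1}{32}$ ensures $4\sqrt{2\varepsilon}<1$ so that $\eta$ is close enough to $\xi$ to be a nonzero vector.
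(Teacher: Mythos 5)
Your proof is correct and is exactly the argument the paper intends: the paper states Corollary~\ref{bla} with the one-line remark that it ``is a consequence of Theorem~\ref{commutators} and Proposition~\ref{fixed_point},'' which is precisely your chain. Your observation about the role of $\varepsilon<\tfrac{1}{32}$ (so that $4\sqrt{2\varepsilon}<1$ and hence $\eta\neq 0$) is also the right reading of that hypothesis.
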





\begin{theorem}\label{product with scalar} Let $n\in\mathbb{N}$ and $\varepsilon>0$ be given, and assume that $n\geqslant 2$ and that $\varepsilon<\displaystyle\frac{1}{32 n^2}$.
Let $\mathcal{G}\subseteq M_n(\complex)$ be a group of unitary matrices
such that every element of $\mathcal{G}$ is a product of a commutator of two elements of $\mathcal{G}$ and a scalar.

If $\mathcal{G}$ has a weak $\varepsilon$-approximate fixed point $\xi\in\complex^n$, 
then $\mathcal{G}$ has a common eigenvector $\eta$, such that $||\xi-\eta||\leqslant 4\sqrt{2\varepsilon} <\displaystyle\frac{1}{n}$.
\end{theorem}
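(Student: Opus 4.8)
The plan is to reduce Theorem~\ref{product with scalar} to the commutator case already treated in Corollary~\ref{bla}, by dealing with the scalars. Write each $G\in\mathcal{G}$ as $G=\mu_G C_G$ where $\mu_G\in\complex$, $|\mu_G|=1$ (since $G$ is unitary and a commutator is unitary, the scalar must have modulus one), and $C_G=[A_G,B_G]$ is a commutator of elements of $\mathcal{G}$. The key observation is that, by Lemma~\ref{referee}, $\|C_G\xi-\xi\|\le 4\sqrt{2\varepsilon}$ for every $G$, even though $C_G$ need not itself lie in $\mathcal{G}$ — wait, it does lie in $\mathcal{G}$ since $\mathcal{G}$ is a group and $A_G,B_G\in\mathcal{G}$; so Lemma~\ref{referee} applies directly. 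Hence $\|G\xi-\mu_G\xi\|=\|C_G\xi-\xi\|\le 4\sqrt{2\varepsilon}$, i.e.\ $\xi$ is an approximate eigenvector for every $G\in\mathcal{G}$ with eigenvalue $\mu_G$. Combined with the definition of $\lambda_G$ in~\eqref{appr eigen vector} and Proposition~\ref{eigenvector}, one gets that $\mu_G$ and $\lambda_G$ must be close (both are unimodular and $\|G\xi-\lambda_G\xi\|\le\sqrt{2\varepsilon}$, $\|G\xi-\mu_G\xi\|\le 4\sqrt{2\varepsilon}$, forcing $|\mu_G-\lambda_G|\le 5\sqrt{2\varepsilon}$ after projecting onto $\xi$).

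Next I would pin down the scalars. The map $G\mapsto\mu_G$ is a homomorphism from $\mathcal{G}$ to the circle group (since $C\mathcal{G}=\mathcal{G}'$ forces $\mu_{GH}=\mu_G\mu_H$; concretely, $\det G=\mu_G^n\det C_G=\mu_G^n$ because a commutator has determinant one, so $\mu_G^n=\det G$). Therefore $\mu_G$ is always an $n$-th root of $\det G$, and the subgroup $\{\mu_G: G\in\mathcal{G}\}$ of the circle is cyclic of order dividing... hmm, not necessarily finite, but each $\mu_G$ satisfies $\mu_G^n=\det G$ with $|\det G|=1$. The point I want is: since $\varepsilon<\tfrac{1}{32n^2}$, the quantity $5\sqrt{2\varepsilon}$ is small enough that the approximate eigenvalue $\lambda_G$ cannot ``wrap around.'' Consider the group $\widetilde{\mathcal{G}}=\{\bar\mu_G G: G\in\mathcal{G}\}$; this is again a group of unitaries (using $\mu_{GH}=\mu_G\mu_H$), every element $\bar\mu_G G=C_G$ is a commutator in $\mathcal{G}$ — but is it a commutator in $\widetilde{\mathcal{G}}$? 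That is the delicate point, addressed below. Granting it, $\widetilde{\mathcal{G}}$ satisfies the hypotheses of Corollary~\ref{bla}, and since $\|(\bar\mu_G G)\xi-\xi\|=\|C_G\xi-\xi\|\le 4\sqrt{2\varepsilon}<\tfrac1{32}$... actually I need $\varepsilon<\tfrac1{32}$, which follows from $\varepsilon<\tfrac1{32n^2}$ and $n\ge 2$. So $\xi$ is a weak $\varepsilon$-approximate fixed point of $\widetilde{\mathcal{G}}$, hence $\widetilde{\mathcal{G}}$ has a genuine fixed point $\eta$ with $\|\xi-\eta\|\le 4\sqrt{2\varepsilon}$. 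Then for every $G\in\mathcal{G}$, $G\eta=\mu_G\eta$, so $\eta$ is a common eigenvector of $\mathcal{G}$, and the bound $4\sqrt{2\varepsilon}<\tfrac1n$ follows from $\varepsilon<\tfrac1{32n^2}$.

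The main obstacle I anticipate is exactly the passage ``$C_G$ is a commutator in $\mathcal{G}$'' versus ``$\bar\mu_G G=C_G$ is a commutator in $\widetilde{\mathcal{G}}$'' — i.e.\ whether $\widetilde{\mathcal{G}}$ inherits the perfect-up-to-scalars structure in the precise form Corollary~\ref{bla} needs (namely that \emph{every} element of $\widetilde{\mathcal{G}}$ is a commutator of two elements \emph{of $\widetilde{\mathcal{G}}$}). One fix is to observe that $\widetilde{\mathcal{G}}$ is the derived subgroup situation: if $C_G=[A,B]$ with $A,B\in\mathcal{G}$, write $A=\mu_A\widetilde A$, $B=\mu_B\widetilde B$ with $\widetilde A,\widetilde B\in\widetilde{\mathcal{G}}$; then $[A,B]=[\widetilde A,\widetilde B]$ since the scalars cancel in the commutator, so indeed $C_G=[\widetilde A,\widetilde B]$ is a commutator of elements of $\widetilde{\mathcal{G}}$. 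This resolves the obstacle cleanly, and it is the crux of why the scalar can be removed without cost. The remaining work is bookkeeping: verifying $\widetilde{\mathcal{G}}$ is a compact group of unitaries (it is a continuous image of the compact group $\mathcal{G}$ under $G\mapsto\bar\mu_G G$, which is a group homomorphism into the unitaries once one checks $\mu$ is multiplicative — and multiplicativity of $\mu$ should be derived carefully, perhaps from $\mu_G^n=\det G$ together with the approximate-eigenvalue rigidity forcing a consistent choice of root, or more simply by noting $\mu_{GH}C_{GH}=GH=\mu_G\mu_H C_G C_H$ so $\mu_{GH}(\mu_G\mu_H)^{-1}=C_GC_H C_{GH}^{-1}\in\mathcal{G}'$ is a scalar in the perfect group, hence forced), and then tracking the constant $4\sqrt{2\varepsilon}$ through to the final inequality $<\tfrac1n$.
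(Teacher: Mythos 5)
Your overall strategy tracks the paper's quite closely (reduce to the commutator case after peeling off the scalar), and you correctly identify the key technical observation that the scalar can be absorbed into the commutator: $[\mu A,\nu B]=[A,B]$. However, you flag as ``bookkeeping'' the step that is actually the crux, and one of the two fixes you suggest for it does not work.

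The gap is the well-definedness and multiplicativity of $G\mapsto\mu_G$. The decomposition $G=\mu_G C_G$ with $C_G$ a commutator is not canonical: different choices of $A_G,B_G$ can produce different commutators and hence different scalars (e.g.\ if $\alpha I\in\mathcal{G}$ then $G=\mu\,[A,B]=(\mu\alpha^{-n})\,[\alpha A,B]$ is another such decomposition once $\alpha^n\neq 1$, or more simply one can multiply $[A,B]$ by any unimodular scalar lying in $\mathcal{G}'$). Without a canonical choice, $\mu$ is not a function, let alone a homomorphism, and $\widetilde{\mathcal{G}}=\{\bar\mu_G G\}$ is not obviously a group. Your second proposed fix — that $\mu_{GH}(\mu_G\mu_H)^{-1}=C_GC_HC_{GH}^{-1}$ is a scalar in $\mathcal{G}'$ ``hence forced'' — is not valid: a scalar lying in the derived subgroup need not be $1$; indeed scalars in the derived group are exactly the ambiguity you are trying to kill. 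Your first proposed fix (``approximate-eigenvalue rigidity'') is the right one, and this is precisely what the paper does and where the hypothesis $\varepsilon<\frac{1}{32n^2}$ is spent. Concretely, the paper first passes to $\mathcal{G}_0=\{G\in\overline{\mathbb{T}\mathcal{G}}:\det G=1\}$, which forces the scalar $\lambda$ in $G=\lambda[A,B]$ to satisfy $\lambda I\in\mathcal{G}_0$ and hence $\lambda^n=1$; then since $4\sqrt{2\varepsilon}<\tfrac1n<\tfrac12 d_n$ (with $d_n\geq 4/n$ the gap between adjacent $n$-th roots of unity) the estimate $\|G\xi-\lambda\xi\|\leqslant 4\sqrt{2\varepsilon}$ determines $\lambda$ uniquely, and a short triangle-inequality computation then shows $\lambda$ is multiplicative in $G$. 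You should incorporate that reduction and the distance-to-roots-of-unity argument explicitly; without it the rest of your argument has no well-defined $\mu$ to work with. Once that is in place, your route through Corollary~\ref{bla} applied to $\widetilde{\mathcal{G}}$ is a clean packaging of the paper's Haar-measure averaging step and does give the claimed $\|\xi-\eta\|\leqslant 4\sqrt{2\varepsilon}<\tfrac1n$.
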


  \begin{proof}
    Assume with no loss of generality that $\mathcal{G}=\overline{ \mathbb{T}\cdot \mathcal{G}}$, where $\mathbb{T}=\{z\in\complex\,|\ |z|= 1\}$, and ``bar" denotes the closure in the Euclidean topology.  Now note that it is sufficient to prove the result for $\mathcal{G}_0= \{G\in \mathcal{G}\,|\, \det(G)=1\}$.  This is because $\mathcal{G}_0$ also satisfies the assumptions of the theorem and because every element of $\mathcal{G}$ is a $\mathbb{T}$-multiple of an element of $\mathcal{G}_0$.

 Choose a $G\in\mathcal{G}_0$ and write
 \begin{equation}\label{eq:1}
G=\lambda[A,B]
\end{equation} for some $A,B\in\mathcal{G}_0$ and a scalar $\lambda$. Note that $\lambda I=G[A,B]^{-1}$, so $\lambda I\in\mathcal{G}_0$ and hence $\lambda^n=1$.

By Lemma \ref{referee} we have that
\[
    \|[A,B]\xi-\xi\|\leqslant 4\sqrt{2\varepsilon}
\]
implying that
\begin{equation}\label{eq:estimate}
    \|G\xi-\lambda\xi\|=\|\lambda[A,B]\xi-\lambda\xi\|\leqslant 4\sqrt{2\varepsilon}.
\end{equation}

Since the adjacent $n$-th roots of unity are at the distance
\[d_n=\left|e^{2\pi i/n}-1\right|\geqslant\frac{4}{n},\]
and
\[4\sqrt{2\varepsilon}<4\sqrt{2\frac{1}{32n^2}}=\frac{1}{n}<\frac{1}{2} d_n,\]
we conclude that $\lambda$ defined by \eqref{eq:1}    is
unique. Consequently, we may define a map $\varrho$ going from
$\mathcal{G}$ to $\mathcal{G}$, defined by $\varrho\colon
G\mapsto\lambda I$. We will show that this map is a group
homomorphism.  Indeed, choose $G=\lambda[A,B]$ for some
$A,B\in\mathcal{G}$ and a scalar $\lambda=\varrho(G)$ and
choose $\widetilde{G} = \widetilde{\lambda}\,[\widetilde{A},
\widetilde{B}]$ for some
$\widetilde{A},\widetilde{B}\in\mathcal{G}$ and a scalar
$\widetilde{\lambda}=\rho(\widetilde{G})$. Then,
\begin{align*}
    \|G\widetilde{G}\xi-\lambda\widetilde{\lambda}\xi\|& =\|(\lambda[A,B]\widetilde{\lambda}[\widetilde{A},\widetilde{B}]\xi-\lambda\widetilde{\lambda}\xi)\| =\|\lambda\widetilde{\lambda}([A,B][\widetilde{A},\widetilde{B}]\xi-\xi)\|  \\
    &\leqslant\|[A,B]([\widetilde{A},\widetilde{B}]\xi-\xi)\|+\|[A,B]\xi-\xi\|\leqslant8\sqrt{2\varepsilon}<\frac{2}{n}
\end{align*}
Hence
\begin{align*}
\left|\varrho(G\widetilde{G})-\varrho(G)\varrho(\widetilde{G})\right|
&= \left|\left| \left(\varrho(G\widetilde{G})-\varrho(G)\varrho(\widetilde{G})\right)\xi\right|\right|\\
&\leqslant \left|\left|\varrho(G\widetilde{G})\xi - (G\widetilde{G})\xi\right|\right|+\left|\left|(G\widetilde{G})\xi-\varrho(G)\varrho(\widetilde{G})\xi\right|\right|\\
&<\frac{1}{n}+\frac{2}{n}<d_n.
\end{align*}
Thus $\varrho(G)\varrho(\widetilde{G})=\lambda\widetilde{\lambda}= \varrho(G\widetilde{G})$.  As in the proof of Proposition \ref{weak appr fixed} let $\mu$ be the Haar measure on $\mathcal{G}$ and introduce
\[
    \eta= \int_{G\in \mathcal{G}}\overline{\varrho(G)}G\xi\,d\mu,
\]
so that
\[
    H\eta=\int_{G\in \mathcal{G}}\overline{\varrho(G)}HG\xi\,d\mu
    =\varrho(H) \int_{G\in \mathcal{G}} \overline{\varrho(HG)}HG\xi\,d\mu =\varrho(H)\eta
\]
for all $H\in \mathcal{G}$, implying that $\eta$ is a common eigenvector of all elements of the group. Also
\[
    ||\eta-\xi||=\sup_{\|\vartheta\|\leqslant1}|\langle\eta-\xi| \vartheta\rangle|=\sup_{\|\vartheta\|\leqslant1}\left|\int_{G\in \mathcal{G}}\overline{\varrho(G)}\langle G\xi-\varrho(G)\xi|
        \vartheta\rangle\,d\mu\right| \leqslant4\sqrt{2\varepsilon}
\]
as desired.
  \end{proof}   


\section{Monomial groups}\label{sec:mon}

In the following proposition we need a version of the
well-known Hardy-Littlewood-P\'olya theorem \cite{HarLitPolI},
given as Theorem 368 on p.\ 261 of \cite{HarLitPolII} (the
inequality is often called the ``Rearrangement Inequality").
Let $x_1\geqslant x_2\geqslant \cdots\geqslant x_n\geqslant0$
and $y_1 \geqslant y_2\geqslant \cdots\geqslant y_n\geqslant0$
be two  $n$-tuples  of reals and let $\pi$ be a permutation of
$n$ indices. Then,
\begin{equation}\label{HLP}
    \sum_{i=1}^nx_iy_{\pi(i)}\leqslant  \sum_{i=1}^nx_iy_{i}.
\end{equation}

In what follows we will denote by $|\xi|$ the vector whose entries consist of the absolute values of the corresponding entries of $\xi$. Similarly, $|G|$ will denote the matrix whose entries are equal to the absolute values of the corresponding entries of $G\in M_n(\complex)$. For a group $\mathcal{G}\subseteq M_n(\complex)$ we will denote by $|\mathcal{G}|$ the set of all $|G|$ for $G\in\mathcal{G}$. Recall that a unitary group $\mathcal{G}$ is $\emph{monomial}$ whenever $|\mathcal{G}|$ consists of permutation matrices which implies that it is a permutation group. Furthermore, recall that a group of matrices is called \emph{indecomposable} if it has no nontrivial invariant subspace spanned by a subset of the standard basis vectors.



In the following proposition we will need the vector
\begin{equation}\label{fixed}
    \eta=\frac{1}{\sqrt{n}}\left(
    \begin{array}{c}
      1 \\
      1 \\
      \vdots \\
      1
    \end{array}
    \right).
\end{equation}

\begin{proposition}\label{indecomposable}
Let $n\in\mathbb{N}$ and $\varepsilon>0$ be given. If an indecomposable monomial unitary group $\mathcal{G}\subseteq M_n(\complex)$ has a weak $\varepsilon$-approximate fixed point $\xi$, then:
\begin{description}
  \item[(a)] $|\mathcal{G}|$ has a fixed point $\eta$ such that $\|\,\eta-|\xi|\,\|\leqslant\varepsilon_1$, where $\varepsilon_1=n \sqrt{n\varepsilon}$;
  \item[(b)] if $\varepsilon_1<\dfrac{1}{3}$ and $\xi=|\xi|$ then $\eta$ as defined by \eqref{fixed} is a weak $\varepsilon_2$-approximate fixed point of $\mathcal{G}$, where $\varepsilon_2 =3\varepsilon_1$;
  \item[(c)] if $\varepsilon_1<\dfrac{1}{3}$ then there is a weak $\varepsilon_2$-approximate fixed point $\zeta$ of $\mathcal{G}$, where $\varepsilon_2 =3\varepsilon_1$, and $|\zeta|=\eta$ as defined by \eqref{fixed}.
\end{description}
\end{proposition}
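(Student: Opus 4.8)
The plan is to establish (a) first and then obtain (b) and (c) from it by a perturbation argument. I would use two structural facts about a monomial group: the map $G\mapsto |G|$ is a group homomorphism of $\mathcal{G}$ onto the permutation group $|\mathcal{G}|$, and $\mathcal{G}$ is indecomposable if and only if $|\mathcal{G}|$ is transitive (a subset of the standard basis spans an invariant subspace exactly when the corresponding index set is invariant under $|\mathcal{G}|$). Replacing $\mathcal{G}$ by its closure, which affects neither $|\mathcal{G}|$ nor the hypotheses, I may assume $\mathcal{G}$ is compact, so that Proposition~\ref{eigenvector} is available.

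For (a): since every $G\in\mathcal{G}$ is monomial we have the entrywise identities $|G\xi|=|G|\,|\xi|$ and $|\lambda_G\xi|=|\xi|$ (the latter because $|\lambda_G|=1$); combining these with the elementary bound $\|\,|u|-|v|\,\|\le\|u-v\|$ and Proposition~\ref{eigenvector} yields $\|\,|G|\,|\xi|-|\xi|\,\|\le\|G\xi-\lambda_G\xi\|\le\sqrt{2\varepsilon}$ for all $G$. Since $|\mathcal{G}|$ acts transitively on the coordinates, this forces all the moduli $|\xi_i|$ to be within $\sqrt{2\varepsilon}$ of one another. Writing $\bar a=\frac1n\sum_i|\xi_i|$, it follows that $||\xi_i|-\bar a|\le\sqrt{2\varepsilon}$, hence $1-n\bar a^2=\sum_i(|\xi_i|-\bar a)^2\le 2n\varepsilon$ and so $\sqrt n\,\bar a\ge 1-2n\varepsilon$; since $\langle\eta,|\xi|\rangle=\sqrt n\,\bar a$ this gives $\|\eta-|\xi|\|^2=2-2\sqrt n\,\bar a\le 4n\varepsilon\le n^3\varepsilon=\varepsilon_1^2$ when $n\ge 2$ (the case $n=1$ being trivial), which is (a). I expect this chain of estimates — converting ``almost equal moduli'' into the sharp bound $\varepsilon_1$ — to be the only part that needs genuine care.

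For (c), suppose $\varepsilon_1<\frac13$, i.e.\ $\varepsilon<\frac1{9n^3}$. I would first note that $\xi$ has no zero coordinate, since $\xi_i=0$ would force $|\xi_j|\le\sqrt{2\varepsilon}$ for every $j$ by the estimate in (a), giving $1=\sum_j|\xi_j|^2\le 2n\varepsilon<1$. Hence $\zeta$ with $\zeta_i=\frac1{\sqrt n}\cdot\frac{\xi_i}{|\xi_i|}$ is well defined, has norm one, and satisfies $|\zeta|=\eta$; moreover $|\zeta_i-\xi_i|=|\frac1{\sqrt n}-|\xi_i||$, so $\|\zeta-\xi\|=\|\eta-|\xi|\|\le\varepsilon_1$ by (a). Expanding $\langle G\zeta,\zeta\rangle=\langle G\xi,\xi\rangle+\langle G(\zeta-\xi),\zeta\rangle+\langle G\xi,\zeta-\xi\rangle$ and bounding the last two terms by $\|\zeta-\xi\|\le\varepsilon_1$ each, I get $|\langle G\zeta,\zeta\rangle|\ge 1-\varepsilon-2\varepsilon_1$ for all $G$; since $\varepsilon\le\varepsilon_1^2<\varepsilon_1$ (using $\varepsilon_1<\frac13$), the right-hand side is at least $1-3\varepsilon_1=1-\varepsilon_2$, and $0<\varepsilon_2<1$, so $\zeta$ is a weak $\varepsilon_2$-approximate fixed point, which is (c). Finally, (b) is the special case $\xi=|\xi|$: then the construction gives $\zeta_i=\frac1{\sqrt n}$ for all $i$, i.e.\ $\zeta=\eta$, and the same estimate (now using $\|\eta-\xi\|=\|\eta-|\xi|\|\le\varepsilon_1$) shows that $\eta$ itself is a weak $\varepsilon_2$-approximate fixed point of $\mathcal{G}$.
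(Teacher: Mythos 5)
Your proof is correct, and for part~(a) it takes a genuinely different route from the paper's. The paper argues from scratch: after reducing to $\xi=|\xi|$ with $\xi_1\geqslant\cdots\geqslant\xi_n$, it uses transitivity to find, for each $k$, a permutation sending $k\mapsto k+1$, then invokes the Rearrangement Inequality~\eqref{HLP} to extract $\xi_k-\xi_{k+1}\leqslant\sqrt{\varepsilon}$, and finally telescopes to get $\max_{i,j}|\xi_i-\xi_j|\leqslant(n-1)\sqrt{\varepsilon}$. You instead feed each monomial $G$ into Proposition~\ref{eigenvector} and use the entrywise identities $|G\xi|=|G|\,|\xi|$, $|\lambda_G\xi|=|\xi|$ together with $\|\,|u|-|v|\,\|\leqslant\|u-v\|$ to obtain the \emph{uniform} bound $\|\,|G|\,|\xi|-|\xi|\,\|\leqslant\sqrt{2\varepsilon}$ at once, and then one application of transitivity gives $|\,|\xi_i|-|\xi_j|\,|\leqslant\sqrt{2\varepsilon}$ for \emph{all} pairs $i,j$ without any telescoping. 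This is cleaner, avoids \eqref{HLP} entirely, and produces a strictly tighter estimate ($\|\eta-|\xi|\|^2\leqslant 4n\varepsilon$ versus the paper's $\sim n^3\varepsilon$), comfortably inside the stated $\varepsilon_1^2=n^3\varepsilon$; the price is one appeal to Proposition~\ref{eigenvector} and hence the harmless reduction to the closure of $\mathcal{G}$. Your mean-value step ($1-n\bar a^2=\sum_i(|\xi_i|-\bar a)^2$) is a nice way to close the gap from ``entries nearly equal'' to the $2$-norm bound. For (b) and (c) your argument matches the paper's perturbation estimate; you simply prove (c) directly (constructing $\zeta_i=\tfrac1{\sqrt n}\xi_i/|\xi_i|$ after noting $\xi$ has no zero entry) and read off (b) as the case $\xi=|\xi|$, whereas the paper proves (b) first and gets (c) by a diagonal unitary conjugation — essentially the same content in the opposite order.
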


 \begin{proof}
   Assume with no loss that $\xi=|\xi|$ (otherwise use an
appropriate unitary diagonal similarity on the group). Then,
assume with no loss that $\xi_1\geqslant \xi_2\geqslant
\cdots\geqslant \xi_n\geqslant0$ (otherwise use an appropriate
permutational similarity which is necessarily unitary).  Now,
any $G\in\mathcal{G}$ is determined by a permutation $\pi$ of
$n$ indices and a choice of $n$ complex numbers $\gamma_1,
\gamma_2, \ldots, \gamma_n$ of modulus one, so that for any $x
\in\complex^n$ we have $(Gx)_i=\gamma_ix_{\pi(i)}$. By the
definition of a weak approximate fixed point and \eqref{fixed} we get
\[
    1-\varepsilon\leqslant |\langle G\xi,\xi\rangle|\leqslant \left| \sum_{i=1}^n \gamma_i\xi_{\pi(i)}\xi_i \right| \leqslant 1.
\]
The fact that $\mathcal{G}$ is indecomposable implies that the permutation group corresponding to $|\mathcal{G}|$ acts transitively on the standard basis vectors. So, for any $k,1\leqslant k < n$, there exists a permutation $\pi$ corresponding to a certain $|G|\in|\mathcal{G}|$ such that $\pi(k)= k+1$, implying
\[
    1-\varepsilon\leqslant \xi_k\xi_{k+1}+\sum_{\overset{i=1}{i\neq k}}^n \xi_i\xi_{\pi(i)}\leqslant 2\xi_k\xi_{k+1} +\sum_{\overset{i=1}{i\neq k,k+1}}^n \xi_i^2.
\]
In the first sum above we use (\ref{HLP}) with $x_i$'s equal to $\xi_i$'s after omitting $\xi_k$ and $y_i$'s equal to $\xi_i$'s after omitting $\xi_{k+1}$ in order to get the estimate. Recall that $\sum_{i=1}^n\xi_i^2=1$ to get, after rearranging,
\[
    \xi_k^2+\xi_{k+1}^2-2\xi_k\xi_{k+1}\leqslant\varepsilon,
\]
so that $0\leqslant \xi_k-\xi_{k+1}\leqslant\sqrt{\varepsilon}$. It follows that  $0\leqslant \xi_1-\xi_{n}= \sum_{k=1}^{n-1}(\xi_k-\xi_{k+1}) \leqslant(n-1)\sqrt{\varepsilon}$, implying finally
\begin{equation}\label{max}
    \max_{1\leqslant i,j\leqslant n}|\xi_i-\xi_j|\leqslant(n-1)\sqrt{\varepsilon}.
\end{equation}
We now want to show that the vector $\xi$ is close to a fixed point of the group $|\mathcal{G}|$. Actually, we may choose for that purpose the vector $\eta$ defined by \eqref{fixed}. The simple observation $n\xi_1^2 \geqslant \sum_{i=1}^n \xi_i^2=1\geqslant n\xi_n^2$ yields $\xi_1\geqslant \eta_i\geqslant\xi_n$ for all $i, 1\leqslant i\leqslant n$, so that \eqref{max} implies $\displaystyle |\eta_i-\xi_i|\leqslant\max_{1\leqslant
j\leqslant n}|\xi_i-\xi_j|\leqslant(n-1)\sqrt{\varepsilon}$,
for all $i, 1\leqslant i\leqslant n$, and consequently
\[
    \|\eta-\xi\|\leqslant (n-1)\sqrt{n\varepsilon}<n\sqrt{n\varepsilon}=\varepsilon_1
\]
yielding \textbf{(a)}. This expression also implies that
\[
    \Bigl||\langle G\xi,\eta\rangle|-|\langle G\xi,\xi\rangle|\Bigr|\leqslant |\langle G\xi,\eta-\xi\rangle| \leqslant \varepsilon_1
\]
and similarly $\Bigl||\langle G\eta,\eta\rangle|-|\langle
G\xi,\eta\rangle|\Bigr| \leqslant \varepsilon_1$. So,
\[
    \Bigl||\langle G\xi,\xi\rangle|-|\langle G\eta,\eta\rangle|\Bigr|\leqslant 2 \varepsilon_1,
\]
and the fact that $\varepsilon\leqslant\varepsilon_1$ now implies that
\[
    |\langle G\eta,\eta\rangle|\geqslant1-\varepsilon_2,
\]
where $\varepsilon_2=3\varepsilon_1$, thus showing \textbf{(b)}. (Note that $\xi=|\xi|$ was assumed ``with no loss'' at the beginning of this proof.) \textbf{(c)} The general case is now obtained using a unitary diagonal similarity on the group. If this similarity is given by a diagonal $U$, then $\zeta=U\eta$ yields the desired result.
 \end{proof}


\begin{lemma}\label{observation}
Let $\varphi\in [-\pi,\pi]$.  Then
\begin{equation}\label{eqobservation}
    \frac{1}{2} |\varphi|\leqslant \frac{2}{\pi} |\varphi|\leqslant |e^{\varphi i}-1|\leqslant |\varphi|.
\end{equation}
\end{lemma}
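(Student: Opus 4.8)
The plan is to reduce everything to elementary bounds on the sine function. First, the leftmost inequality $\frac12|\varphi|\leqslant\frac{2}{\pi}|\varphi|$ is immediate, since $\pi<4$ forces $\frac12\leqslant\frac{2}{\pi}$; it is recorded only because the weaker constant $\frac12$ is the one used later. So the real content is the middle chain $\frac{2}{\pi}|\varphi|\leqslant|e^{\varphi i}-1|\leqslant|\varphi|$.

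Next I would apply the half-angle identity: factoring out $e^{\varphi i/2}$ gives $e^{\varphi i}-1=e^{\varphi i/2}\bigl(e^{\varphi i/2}-e^{-\varphi i/2}\bigr)=2i\,e^{\varphi i/2}\sin(\varphi/2)$, and since $|e^{\varphi i/2}|=1$ this yields $|e^{\varphi i}-1|=2\,|\sin(\varphi/2)|$. Setting $t=\varphi/2\in[-\pi/2,\pi/2]$, the claim becomes the pair of inequalities $\frac{2}{\pi}|t|\leqslant|\sin t|\leqslant|t|$ valid for $|t|\leqslant\pi/2$. Both sides are even in $t$, so it suffices to treat $t\in[0,\pi/2]$.

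For the upper bound $\sin t\leqslant t$ I would write $\sin t=\int_0^t\cos s\,ds\leqslant\int_0^t 1\,ds=t$ (equivalently, the mean value theorem applied to $\sin$). For the lower bound $\sin t\geqslant\frac{2}{\pi}t$ (Jordan's inequality) I would invoke concavity of $\sin$ on $[0,\pi/2]$: since $\sin''=-\sin\leqslant0$ there, the graph of $\sin$ lies above the chord joining $(0,0)$ to $(\pi/2,1)$, and that chord is precisely $s\mapsto\frac{2}{\pi}s$; evaluating at $s=t$ gives the bound. Multiplying the resulting inequalities $\frac{2}{\pi}t\leqslant\sin t\leqslant t$ by $2$ and substituting back $t=\varphi/2$ closes the proof.

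There is no serious obstacle here; the only point needing more than a one-line ``standard calculus fact'' is Jordan's inequality, and the concavity/chord argument dispatches it cleanly. One should only be mildly careful that the estimates are applied on the correct interval — the half-angle reduction to $t\in[0,\pi/2]$ is exactly what keeps us inside the range where concavity of $\sin$, hence Jordan's inequality, is valid.
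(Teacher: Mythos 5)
Your proof is correct and follows essentially the same route as the paper: both reduce $|e^{\varphi i}-1|$ to $2|\sin(\varphi/2)|$ and obtain Jordan's inequality from concavity of sine (graph above the chord). The only cosmetic difference is in the upper bound, where you use $\sin t=\int_0^t\cos s\,ds\leqslant t$ while the paper appeals to the geometric fact that a chord is shorter than the subtended arc.
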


  \begin{proof}
    Clearly, it suffices to give the proof only for
$\varphi\geqslant0$ since $|e^{-\varphi i}-1|=|e^{\varphi
i}-1|$. Observe that the function $\displaystyle
f(\varphi)=2\sin\left(\frac{\varphi}{2}\right)$ is
 convex upwards  on $[-\pi,\pi]$ since its second
derivative is nonpositive on this interval.
This implies that the graph of the curve lies above the line segment connecting any two points on the graph and, in particular, the endpoints of this interval.  So, $\displaystyle \frac{2}{\pi}\varphi\leqslant f(\varphi)= 2 \sin\left( \frac{\varphi}{2}\right)$ since $\varphi \geqslant0$, 
and the second of the three inequalities in \eqref{eqobservation} follows. Now, the first one is obvious and the third follows from the fact that a line segment is shorter than the arc connecting the same points.
  \end{proof}

\begin{lemma}\label{observationA}
Let $\varphi_j\in [-\pi,\pi]$ for $j=1,2,\ldots,k$.  If
\begin{equation}\label{assumption}
    \left|1+\sum_{j=1}^ke^{\varphi_ji}\right|\ge (k+1)(1-\varepsilon);\qquad  (0<\varepsilon <1)
\end{equation}
then
\begin{equation}\label{eqobservationA}
     \sum_{j=1}^k|\varphi_j|
     <\pi\sqrt{k}(k+1)\sqrt{\frac{ \varepsilon}{2}}.
\end{equation}
\end{lemma}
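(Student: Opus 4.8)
The plan is to extract information from the hypothesis \eqref{assumption} by looking at the real part of the sum $1+\sum_j e^{\varphi_j i}$. First I would note that $\left|1+\sum_{j=1}^k e^{\varphi_j i}\right|^2 \le (k+1)^2$ always holds (triangle inequality), and that $\mathrm{Re}\bigl(1+\sum_j e^{\varphi_j i}\bigr) \ge \left|1+\sum_j e^{\varphi_j i}\right| - \text{(something)}$ is not quite the right move; instead the clean route is: since $\left|1+\sum_j e^{\varphi_j i}\right| \ge (k+1)(1-\varepsilon)$ and the vector $1+\sum_j e^{\varphi_j i}$ has modulus at most $k+1$, after multiplying all $\varphi_j$ by a common unimodular phase (equivalently rotating) we may assume the sum is a nonnegative real number, so that $\mathrm{Re}\bigl(1+\sum_j e^{\varphi_j i}\bigr) = \bigl|1+\sum_j e^{\varphi_j i}\bigr| \ge (k+1)(1-\varepsilon)$. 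Actually the phases are fixed by the statement, so the honest version is: $\mathrm{Re}(1+\sum_j e^{\varphi_j i}) \ge \bigl|1 + \sum_j e^{\varphi_j i}\bigr|\cdot\cos\theta$ where $\theta$ is the argument — this is messy. The cleanest honest path: from $|w| \ge (k+1)(1-\varepsilon)$ with $w = 1+\sum_j e^{\varphi_j i}$ and each summand plus the $1$ being unit vectors, write $|w| \le \mathrm{Re}(w) + \frac{(\mathrm{Im}\,w)^2}{2\,\mathrm{Re}\,w}$-type bounds; but simpler still is to bound $\sum_j (1-\cos\varphi_j)$ directly from $\mathrm{Re}\,w = 1 + \sum_j \cos\varphi_j \le |w| \le k+1$, which gives nothing by itself, so one genuinely needs the lower bound on $|w|$ transferred to $\mathrm{Re}\,w$.

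The key computational step I expect to carry out is this: $(k+1)^2(1-\varepsilon)^2 \le |w|^2 = \bigl(1+\sum_j\cos\varphi_j\bigr)^2 + \bigl(\sum_j \sin\varphi_j\bigr)^2$. Expanding and using $\cos^2+\sin^2=1$ plus cross terms $\cos\varphi_a\cos\varphi_b + \sin\varphi_a\sin\varphi_b = \cos(\varphi_a-\varphi_b)\le 1$, one gets $|w|^2 \le (k+1)^2 - \sum_{\text{pairs}}\bigl(1-\cos(\varphi_a-\varphi_b)\bigr) - \text{terms involving } 1-\cos\varphi_j$. Including the fixed vector $1=e^{0i}$ as the index $j=0$, so $w = \sum_{j=0}^k e^{\varphi_j i}$ with $\varphi_0 = 0$, we get the identity $|w|^2 = (k+1) + 2\sum_{0\le a<b\le k}\cos(\varphi_a-\varphi_b) = (k+1)^2 - 2\sum_{0\le a<b\le k}\bigl(1-\cos(\varphi_a-\varphi_b)\bigr)$. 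Hence $2\sum_{0\le a<b\le k}\bigl(1-\cos(\varphi_a-\varphi_b)\bigr) \le (k+1)^2 - (k+1)^2(1-\varepsilon)^2 = (k+1)^2(2\varepsilon - \varepsilon^2) \le 2\varepsilon(k+1)^2$. Restricting to pairs with $a=0$ gives $\sum_{j=1}^k\bigl(1-\cos\varphi_j\bigr) \le \varepsilon(k+1)^2$. Now $1-\cos\varphi_j = 2\sin^2(\varphi_j/2) = \tfrac12|e^{\varphi_j i}-1|^2 \ge \tfrac12\cdot\tfrac14\varphi_j^2 = \varphi_j^2/8$ by Lemma \ref{observation}. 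Therefore $\sum_{j=1}^k \varphi_j^2 \le 8\varepsilon(k+1)^2$.

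Finally I would apply the Cauchy–Schwarz inequality: $\sum_{j=1}^k|\varphi_j| \le \sqrt{k}\,\bigl(\sum_{j=1}^k\varphi_j^2\bigr)^{1/2} \le \sqrt{k}\cdot\sqrt{8\varepsilon(k+1)^2} = 2\sqrt{2}\,\sqrt{k}\,(k+1)\sqrt{\varepsilon} = 4\sqrt{k}(k+1)\sqrt{\varepsilon/2}$, and since $4 < \pi$ this gives the strict inequality $\sum_{j=1}^k|\varphi_j| < \pi\sqrt{k}(k+1)\sqrt{\varepsilon/2}$ claimed in \eqref{eqobservationA} (one can even get strictness cleanly because $2\varepsilon-\varepsilon^2 < 2\varepsilon$ strictly). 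The main obstacle, and the only subtle point, is getting a good lower bound for $1-\cos\varphi_j$ in terms of $\varphi_j^2$ on the whole interval $[-\pi,\pi]$; this is exactly what Lemma \ref{observation} supplies (via $|e^{\varphi i}-1|\ge \tfrac12|\varphi|$), so no new estimate is needed. A secondary point is deciding whether to isolate the $a=0$ pairs or use all pairs — using only the pairs involving the fixed vector $e^{0i}=1$ is the efficient choice and is what makes the constant come out as stated.
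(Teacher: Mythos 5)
Your argument is, in structure, the same as the paper's: expand $\bigl|1+\sum_j e^{\varphi_j i}\bigr|^2$, discard the cross-pair information so that only $\sum_j(1-\cos\varphi_j)$ survives, convert that to $\sum_j \varphi_j^2$ via Lemma~\ref{observation}, then apply Cauchy--Schwarz. The one genuine error is the last numerical step. You lower-bounded $1-\cos\varphi_j = \tfrac12|e^{\varphi_j i}-1|^2$ using the \emph{weakest} inequality in Lemma~\ref{observation}, namely $|e^{\varphi i}-1|\ge\tfrac12|\varphi|$, which gives $1-\cos\varphi_j\ge\varphi_j^2/8$ and, after Cauchy--Schwarz, the bound $\sum_j|\varphi_j|\le 2\sqrt{2}\,\sqrt{k}(k+1)\sqrt{\varepsilon} = 4\sqrt{k}(k+1)\sqrt{\varepsilon/2}$. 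You then wrote ``since $4<\pi$'' to conclude; but $\pi\approx 3.14<4$, so the inequality goes the wrong way and the claimed conclusion does \emph{not} follow from your estimate.

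The fix is to use the sharper middle inequality of Lemma~\ref{observation}, $|e^{\varphi i}-1|\ge\tfrac{2}{\pi}|\varphi|$, which is exactly what the paper does. This gives $1-\cos\varphi_j\ge \tfrac{2}{\pi^2}\varphi_j^2$, hence $\sum_j\varphi_j^2 < \tfrac{\pi^2}{2}\varepsilon(k+1)^2$ (strict because $2\varepsilon-\varepsilon^2<2\varepsilon$, as you noted), and Cauchy--Schwarz then yields $\sum_j|\varphi_j| < \pi\sqrt{k}(k+1)\sqrt{\varepsilon/2}$ exactly. With that replacement the proof is correct and essentially identical to the paper's; your reformulation via the pairwise identity $(k+1)^2-|w|^2 = 2\sum_{a<b}(1-\cos(\varphi_a-\varphi_b))$ with $\varphi_0=0$ is a tidy repackaging of the paper's ``$x_jx_l+y_jy_l\le 1$'' step. (Separately, the first paragraph of your write-up is a chain of abandoned false starts; cut it, since the second paragraph is where the actual argument lives.)
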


  \begin{proof}
    Let $e^{\varphi_j i}=x_j+iy_j, x_j,y_j\in\reals$ for $j=1,2,\ldots,k$.
We first want to show that \eqref{assumption} implies
\begin{equation}\label{eqobservationA1}
     \frac{2}{\pi^2}\sum_{j=1}^k|\varphi_j|^2\leqslant k-\sum_{j=1}^kx_j.
\end{equation}
To this end observe that for any $j\in\{1,2,\ldots,k\}$ and $x_j'=1-x_j$ we have $\displaystyle 2x_j'=x_j'^2+y_j^2 =|e^{\varphi_j i}-1|^2\geqslant \frac{4}{\pi^2}|\varphi_j|^2$ by Lemma \ref{observation}. We sum these estimates and divide by 2 to get \eqref{eqobservationA1}.

Clearly
\begin{equation*}
  \begin{split}
    \left|1+\sum_{j=1}^ke^{\varphi_ji}\right|^2=
    \left(1+\sum_{j=1}^kx_j\right)^2+\left(\sum_{j=1}^ky_j\right)^2&=\\
    \left(1+2\sum_{j=1}^kx_j\right)+\sum_{j=1}^k\sum_{l=1}^k(x_jx_l+y_jy_l).
\end{split}
\end{equation*}
Next, observe that $x_jx_l+y_jy_l=\mathrm{Re}\left(e^{\varphi_ji}\overline{e^{\varphi_li}}\right)\leqslant1$ for all $j,l= 1,2,\ldots,k$, so that
\[
    \left|1+\sum_{j=1}^ke^{\varphi_ji}\right|^2-(k+1)^2\leqslant
    2\left(\sum_{j=1}^kx_j-k\right).
\]
Combine this estimate with \eqref{assumption} and \eqref{eqobservationA1} to get
\begin{equation}\label{eqobservationA4}
    \frac{4}{\pi^2}\sum_{j=1}^k|\varphi_j|^2\leqslant (k+1)^22\varepsilon,
\end{equation}
where we have also used the simple estimate $1-(1-\varepsilon)^2\leqslant2\varepsilon$. So, finally
\[
    \sum_{j=1}^k|\varphi_j|\leqslant \left(k\sum_{j=1}^k|\varphi_j|^2\right)^{1/2} <
    \pi\sqrt{k}(k+1)\sqrt{\frac{ \varepsilon}{2}} 
\]
and the desired Inequality \eqref{eqobservationA} follows.
  \end{proof}

In the following proposition we need the standard notations $\displaystyle ||\xi||_1=\sum_{i=1}^n|\xi_i|$ and  $\displaystyle ||\xi||_2^2=\sum_{i=1}^n|\xi_i|^2$ for any $\xi\in\complex^n$.

\begin{proposition}\label{prop:alpha}
Let $\displaystyle 0<\varepsilon<\frac{2}{n^3}$
 and let $g_1,\ldots, g_n\in \mathbb{T}$ be
such that
$$g_1g_2\cdots g_n=1,\ \mbox{ and}$$
$$|g_1+\cdots+g_n|\ge n(1-\varepsilon).$$
Then there exists $\alpha\in \mathbb{T}$ such that $\alpha^n=1$, and
\begin{eqnarray*}
||(g_1,\ldots, g_n)-\alpha (1,\ldots, 1)||_1 &<& \pi n\sqrt{2n\varepsilon}\ \ \mbox{\rm and} \\
||(g_1,\ldots, g_n)-\alpha (1,\ldots, 1)||_2 &<& \pi n\sqrt{2\varepsilon}.\\
\end{eqnarray*}
\end{proposition}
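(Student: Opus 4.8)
The plan is to reduce to Lemma~\ref{observationA} by dividing all of the $g_j$ by one of them, and then to use the constraint $g_1\cdots g_n=1$ to replace that coordinate by an honest $n$-th root of unity. If $n=1$ then $g_1=1$ and $\alpha=1$ works trivially, so I assume $n\geqslant 2$; then $\varepsilon<2/n^3$ in particular gives $0<\varepsilon<1$, so the machinery of Lemma~\ref{observationA} is applicable.

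First I would write $g_j=g_n e^{\varphi_j i}$ with $\varphi_j\in(-\pi,\pi]$ for $j=1,\dots,n-1$, and set $\varphi_n:=0$. Pulling $g_n$ out of the sum gives
\[
\left|1+\sum_{j=1}^{n-1}e^{\varphi_j i}\right|=|g_1+\cdots+g_n|\geqslant n(1-\varepsilon),
\]
so the hypothesis of Lemma~\ref{observationA} holds with $k=n-1$, and inequality \eqref{eqobservationA4} from its proof gives the bound $\sum_{j=1}^{n-1}\varphi_j^2\leqslant\frac{\pi^2 n^2\varepsilon}{2}$. Next I would bring in the product condition: since $\prod_{j=1}^n g_j=1$ we get $g_n^{\,n}e^{i\Phi}=1$, where $\Phi:=\sum_{j=1}^{n-1}\varphi_j$, i.e.\ $g_n^{\,n}=e^{-i\Phi}$. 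Writing $g_n=e^{i\theta}$, the congruence $n\theta\equiv-\Phi\pmod{2\pi}$ forces $\theta=(2\pi m-\Phi)/n$ for some $m\in\mathbb{Z}$, so I set $\alpha:=e^{2\pi i m/n}$ (hence $\alpha^n=1$) and $\delta:=-\Phi/n$; then $g_n=\alpha e^{i\delta}$ and therefore $g_j=\alpha e^{i(\delta+\varphi_j)}$ for every $j=1,\dots,n$. Note $\alpha$ and $\delta$ are independent of the choice of $\theta$, and $|\Phi|\leqslant\sqrt{(n-1)\sum_{j<n}\varphi_j^2}<\pi$ by the previous bound together with $\varepsilon<2/n^3$.

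It then remains to estimate. From $|e^{it}-1|=2|\sin(t/2)|\leqslant|t|$ for every real $t$ (Lemma~\ref{observation} on $[-\pi,\pi]$, and $|e^{it}-1|\leqslant 2<|t|$ outside it) one has $|g_j-\alpha|=|e^{i(\delta+\varphi_j)}-1|\leqslant|\delta+\varphi_j|$, and hence, using $\sum_{j=1}^n\varphi_j=\Phi$ and $\delta=-\Phi/n$ so that $n\delta^2+2\delta\Phi=-\Phi^2/n$,
\begin{align*}
\|(g_1,\dots,g_n)-\alpha(1,\dots,1)\|_2^2 &\leqslant \sum_{j=1}^n(\delta+\varphi_j)^2 = n\delta^2+2\delta\Phi+\sum_{j=1}^{n-1}\varphi_j^2 \\
&= \sum_{j=1}^{n-1}\varphi_j^2-\frac{\Phi^2}{n}\ \leqslant\ \frac{\pi^2 n^2\varepsilon}{2}\ <\ 2\pi^2 n^2\varepsilon .
\end{align*}
This is the $\|\cdot\|_2$ bound, and the $\|\cdot\|_1$ bound then follows at once from Cauchy--Schwarz: $\|\cdot\|_1\leqslant\sqrt{n}\,\|\cdot\|_2<\sqrt{n}\cdot\pi n\sqrt{2\varepsilon}=\pi n\sqrt{2n\varepsilon}$.

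The only real content is the reduction in the second paragraph; the step to watch is the passage from $g_n$ to the nearby root of unity $\alpha$, where one must check that the phase correction $\delta$ is genuinely small (it is, since $n\delta^2=\Phi^2/n\leqslant\sum_{j<n}\varphi_j^2$) and that it costs nothing in the end (the cross term $2\delta\Phi$ in fact only helps). Beyond that the argument is bookkeeping, and I anticipate no serious obstacle — only keeping $k=n-1$ straight and verifying the hypothesis $\varepsilon<1$ of Lemma~\ref{observationA}, which is precisely why the (harmless) assumption $n\geqslant 2$ is made.
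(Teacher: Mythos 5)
Your proof is correct and follows essentially the same route as the paper's: normalize by one of the $g_j$ (you use $g_n$, the paper uses $g_1$), invoke Lemma~\ref{observationA} (you quote the intermediate $\ell^2$ bound \eqref{eqobservationA4}, the paper the $\ell^1$ conclusion \eqref{eqobservationA}), use the product constraint to identify the nearby $n$-th root of unity, and finish with the linearization $|e^{it}-1|\leqslant|t|$. The only notable differences are small refinements: you exploit the cross-term cancellation $n\delta^2+2\delta\Phi=-\Phi^2/n$ to get a slightly sharper $\ell^2$ estimate, and you derive the $\ell^1$ bound from the $\ell^2$ bound by Cauchy--Schwarz rather than estimating it separately as the paper does.
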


  \begin{proof}
    Write $k=n-1$, and let $\varphi_j\in (-\pi,\pi]$ be such that
\[
    \frac{g_{j+1}}{g_1}=e^{\varphi_j i}
\]
for $j=1,\ldots, k$. Moreover, let $\alpha\in \mathbb{T}$ and $\varphi\in (-\pi/n,\pi/n]$ be such that $\alpha^n=1$ and $g_1=\alpha e^{\varphi i}$. By Lemma \ref{observationA} we have that
\begin{equation}\label{eqobservationA2}
    \sum_{j=1}^k|\varphi_j|<\pi\sqrt{k}(k+1)\sqrt{\frac{ \varepsilon}{2}}<\pi\sqrt{\frac{n^3\varepsilon}{2}} <\pi.
\end{equation}
From $g_1\cdots g_n=1$ we then get that
$n\varphi+\varphi_1+\cdots+\varphi_k=0$ (since the inequalities
in  \eqref{eqobservationA2} and the fact that
$n|\varphi|\leqslant\pi$ give $-2\pi<n\varphi +\varphi_1
+\cdots+\varphi_k<2\pi$).  So, by \eqref{eqobservationA2}
again, we get
\begin{equation}\label{eqobservationA3}
    n|\varphi| \leqslant \sum_{j=1}^k|\varphi_j|<\pi\sqrt{\frac{ n^3\varepsilon}{2}}.
\end{equation}
Hence we have
\[
    ||(g_1,\ldots, g_n)-\alpha(1,1,\ldots, 1)||_1 \leqslant |\varphi|+\sum_{j=1}^k|\varphi+\varphi_j|\leqslant n|\varphi|+\sum_{j=1}^k|\varphi_j|<\pi\sqrt{2n^3\varepsilon},
\]
where we have first used \eqref{eqobservation}, and then \eqref{eqobservationA2} and \eqref{eqobservationA3}.

The inequality involving the 2-norm is verified as follows:
\begin{align*}
    ||(g_1,\ldots, g_n)-\alpha(1,1,\ldots, 1)||_2^2 &\leqslant |\varphi|^2+\sum_{j=1}^k|\varphi+\varphi_j|^2 \leqslant \\ |\varphi|^2 +2\sum_{j=1}^k(|\varphi|^2+|\varphi_j|^2)&=(2n-1)|\varphi|^2+2\sum_{j=1}^k|\varphi_j|^2 <2\pi^2n^2\varepsilon.
\end{align*}
using \eqref{eqobservationA4} (recalling that $k+1=n$) and \eqref{eqobservationA3}.
  \end{proof}

\begin{theorem}\label{reducibility}
Let $n\in\mathbb{N}$ and $\varepsilon>0$ be given. If $n\geqslant 2$ and $\displaystyle \varepsilon<\frac{1}{3600 n^{11}}$ then every indecomposable monomial unitary group $\mathcal{G} \subseteq M_n(\complex)$ with a weak $\varepsilon$-approximate fixed point $\xi$ has a common eigenvector $\zeta$ such that $||\xi-\zeta|| 
<\displaystyle\frac{1}{n}$.
\end{theorem}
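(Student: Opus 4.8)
The plan is to reduce, by a harmless enlargement of the group and a diagonal unitary similarity, to the situation in which the ``flat'' vector $\eta$ of \eqref{fixed} is itself a weak approximate fixed point, and then to produce an honest common eigenvector by averaging a suitable character against the Haar measure, exactly in the spirit of the proof of Theorem~\ref{product with scalar}.

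First I would replace $\mathcal{G}$ by $\overline{\mathbb{T}\cdot\mathcal{G}}$. This is still an indecomposable monomial unitary group, it is compact, it retains $\xi$ as a weak $\varepsilon$-approximate fixed point, and (scalars acting as scalars) it has exactly the common eigenvectors of $\mathcal{G}$, so nothing is lost. From $\varepsilon<\frac{1}{3600n^{11}}$ one checks that $\varepsilon_1:=n\sqrt{n\varepsilon}<\frac{1}{60n^4}<\frac13$, so parts (a) and (c) of Proposition~\ref{indecomposable} apply and yield a weak $\varepsilon_2$-approximate fixed point $\zeta_0$ with $\varepsilon_2=3\varepsilon_1=3n\sqrt{n\varepsilon}$, with $|\zeta_0|=\eta$, and with $\|\zeta_0-\xi\|\le\varepsilon_1$. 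Conjugating the group by the diagonal unitary $U$ with $U\zeta_0=\eta$ (which keeps the group monomial, indecomposable, and $\mathbb{T}$-stable), I may henceforth assume that $\eta$ is a weak $\varepsilon_2$-approximate fixed point of $\mathcal{G}$; then any common eigenvector $\zeta$ of the conjugated group produces the common eigenvector $U^{*}\zeta$ of the original one, with $\|U^{*}\zeta-\xi\|\le\|\zeta-\eta\|+\|\zeta_0-\xi\|\le\|\zeta-\eta\|+\varepsilon_1$.

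Next I would cut down to a subgroup on which Proposition~\ref{prop:alpha} is available uniformly. Sending a monomial matrix to the product of its (modulus-one) nonzero entries is a continuous homomorphism $\delta\colon\mathcal{G}\to\mathbb{T}$, so $\mathcal{G}_0:=\ker\delta$ is a compact subgroup; since $\mathbb{T}\mathcal{G}=\mathcal{G}$, every $G\in\mathcal{G}$ is a scalar times an element of $\mathcal{G}_0$, and therefore it suffices to find a common eigenvector of $\mathcal{G}_0$. For $G\in\mathcal{G}_0$ let $\gamma_1^G,\dots,\gamma_n^G$ be its nonzero entries; then $\gamma_1^G\cdots\gamma_n^G=1$, and because $\eta$ is a weak $\varepsilon_2$-approximate fixed point one has $|\gamma_1^G+\cdots+\gamma_n^G|=n\,|\langle G\eta,\eta\rangle|\ge n(1-\varepsilon_2)$. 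Since $\varepsilon_2<\frac{2}{n^3}$, Proposition~\ref{prop:alpha} supplies an $n$-th root of unity $\alpha_G$ with $\|G\eta-\alpha_G\eta\|=\tfrac{1}{\sqrt n}\,\|(\gamma_1^G,\dots,\gamma_n^G)-\alpha_G(1,\dots,1)\|_2<\pi\sqrt{2n\varepsilon_2}$.

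Then, mimicking Theorem~\ref{product with scalar}, I would promote $G\mapsto\alpha_G$ to an actual homomorphism $\mathcal{G}_0\to\mathbb{T}$. Two distinct $n$-th roots of unity are at distance at least $d_n=|e^{2\pi i/n}-1|\ge\frac4n$ (Lemma~\ref{observation}), and the size of $\varepsilon$ forces both $2\pi\sqrt{2n\varepsilon_2}<d_n$ (making $\alpha_G$ unique) and $3\pi\sqrt{2n\varepsilon_2}<d_n$; hence from $\|GH\eta-\alpha_G\alpha_H\eta\|\le\|H\eta-\alpha_H\eta\|+\|G\eta-\alpha_G\eta\|<2\pi\sqrt{2n\varepsilon_2}$ together with $\|GH\eta-\alpha_{GH}\eta\|<\pi\sqrt{2n\varepsilon_2}$ one concludes $\alpha_{GH}=\alpha_G\alpha_H$. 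Taking $\mu$ to be the Haar measure on the compact group $\mathcal{G}_0$ and setting $\zeta=\int_{\mathcal{G}_0}\overline{\alpha_G}\,G\eta\,d\mu$, left-invariance of $\mu$ and multiplicativity of $\alpha$ give $H\zeta=\alpha_H\zeta$ for all $H\in\mathcal{G}_0$, while $\|\zeta-\eta\|\le\sup_{G\in\mathcal{G}_0}\|G\eta-\alpha_G\eta\|<\pi\sqrt{2n\varepsilon_2}<1$ shows $\zeta\neq0$, so $\zeta$ is a common eigenvector of $\mathcal{G}_0$, hence of $\mathcal{G}$, hence (via $U^{*}$) of the original group, and it lies within $\pi\sqrt{2n\varepsilon_2}+\varepsilon_1$ of $\xi$. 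The hypothesis $\varepsilon<\frac{1}{3600n^{11}}$ is calibrated precisely so that $\pi\sqrt{2n\varepsilon_2}+\varepsilon_1<\frac1n$.

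The step I expect to be the crux is the passage from ``approximately multiplicative'' to ``multiplicative'' for $\alpha$: one must guarantee that every error that has accumulated --- the $3\varepsilon_1$ from Proposition~\ref{indecomposable}, the $\pi\sqrt{2n\varepsilon_2}$ from Proposition~\ref{prop:alpha}, and the two triangle inequalities in the homomorphism argument --- stays strictly below the spacing $d_n\ge\frac4n$ of the $n$-th roots of unity, and that the same budget still forces the final distance below $\frac1n$; fitting all of this under a single constant and a single power of $n$ is exactly what dictates the exponent $11$ and the constant $3600$. A more routine nuisance is transporting the final estimate back through the $\mathbb{T}$-enlargement, the similarity $U$, and the reduction to $\mathcal{G}_0$.
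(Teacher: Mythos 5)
Your proposal is correct and follows essentially the same route as the paper: reduce to a compact $\mathbb{T}$-homogeneous indecomposable monomial group, use Proposition~\ref{indecomposable} to pass to the flat vector $\eta$ of \eqref{fixed}, cut down to the kernel $\mathcal{G}_0$ of the product-of-entries character, use Proposition~\ref{prop:alpha} to build a well-defined map $\alpha\colon\mathcal{G}_0\to\Omega_n$ with $\|G\eta-\alpha(G)\eta\|$ small, check it is a homomorphism via the spacing $d_n\geqslant 4/n$, and then average to obtain the common eigenvector. The only (harmless) deviation from the paper's proof is at the final averaging step: you integrate $\overline{\alpha_G}\,G\eta$ over $\mathcal{G}_0$ with the Haar twist (as in the proof of Theorem~\ref{product with scalar}), whereas the paper instead passes to $\mathcal{G}_1=\ker\alpha$ and applies Proposition~\ref{fixed_point} there; your intermediate estimate $\|G\eta-\alpha(G)\eta\|<\pi\sqrt{2n\widetilde{\varepsilon}}$ is in fact a factor $\sqrt{n}$ sharper than the $\varepsilon'=\pi n\sqrt{2\widetilde{\varepsilon}}$ the paper writes, but since the paper's constant is calibrated to the looser bound, both comfortably beat the threshold $d_n$ and the final $\frac1n$.
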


\begin{proof}
     Assume first with no loss that $\xi=|\xi|$. We then use Proposition \ref{indecomposable}\textbf{(b)} to see that $\eta$ defined by \eqref{fixed} is a weak $\widetilde{\varepsilon}$-approximate fixed point for
\[
 \widetilde{\varepsilon}= 3n\sqrt{n\varepsilon}< \frac{3}{60}\sqrt{\frac{ n^3}{n^{11}}}= \frac{1}{20 n^4}.
\]
We assume, with no loss, that $\mathcal{G}$ is compact and $\mathbb{T}$-homogeneous, i.e., that $\mathcal{G}= \overline{\mathbb{T}\mathcal{G}}$.

Let $\mathcal{G}_0$ be the subgroup of $\mathcal{G}$ consisting of those elements of $\mathcal{G}$ whose weights multiply to $1$, i.e., $\mathcal{G}_0$ is the kernel of the group homomorphism $G\mapsto \det(G)\mathrm{sign}(|G|)$ (here we abuse the notation and use $\mathrm{sign}(|G|)\in\{\pm 1\}$ to denote the parity of the permutation associated to $|G|$). One can think of this homomorphism as assigning to every matrix of $\mathcal{G}$ the product of its non-zero entries. Note that $\mathcal{G}_0$ is large in the sense that every element of $\mathcal{G}$ is a scalar multiple of an element of $\mathcal{G}_0$.

Next, denote by $\Omega_n$ the group of all complex $n$-th roots of unity. We want to prove the existence and uniqueness of a mapping $\alpha\colon \mathcal{G}_0\to\Omega_n$ such that for all $G\in\mathcal{G}_0$ we have that
\[
    \|G\eta-\alpha(G)\eta\|<
    \varepsilon'=\pi n \sqrt{2\widetilde{\varepsilon}}.
\]
Indeed, for any such $G$ there exist $g_1,\ldots, g_n\in \mathbb{T}$ such that $g_1g_2\cdots g_n=1$, and $G=\mathrm{Diag}(g_1,\ldots, g_n)\,|G|$ by the construction above. By Proposition \ref{indecomposable}\textbf{(b)} we have 
\[
    |\langle G\eta,\eta\rangle|\geqslant1-\widetilde{\varepsilon},
\]
which implies that $|g_1+\ldots+ g_n|\geqslant n(1- \widetilde{\varepsilon})$. By Proposition \ref{prop:alpha} there exists an $\alpha(G)\in \Omega_n$ such that
\[
    \|G\eta-\alpha(G)\eta\|<\varepsilon'< \frac{\pi}{\sqrt{10}n} < \frac{1}{n}. 
\]
Now, if $d_n$ is the distance between the adjacent $n$-th roots of unity, then, {by Lemma \ref{eqobservation}, $d_n \geqslant 4/n$.} Then $\alpha$ is unique: Indeed, if there were a $\beta(G)\in \Omega_n$ with the same property, we would get
\begin{equation*}
  \begin{split}
   |\alpha(G)-\beta(G)|=\|\alpha(G)\eta-\beta(G)\eta\| & \leqslant \|G\eta-\beta(G)\eta\| +\|G\eta-\alpha(G)\eta\| \\
     & <2\varepsilon'<d_n
\end{split}
\end{equation*}
yielding $\alpha(G)=\beta(G)$. But this is true for every $G$.

Next we want to show that $\alpha$ is a group homomorphism
. Clearly
\[
  \|GH\eta-\alpha(G)\alpha(H)\eta\|\leqslant \|G(H\eta-\alpha(H)\eta)\| + \|\alpha(H)(G\eta-\alpha(G)\eta)\|<2\varepsilon'
\]
and since also $\|GH\eta-\alpha(GH)\eta\|\leqslant\varepsilon'$, we get
\[
    |\alpha(GH)-\alpha(G)\alpha(H)|<3\varepsilon'<d_n
\]
so that $\alpha(GH)=\alpha(G)\alpha(H)$ as desired.

Let $\mathcal{G}_1$ be the kernel of $\alpha$ and note that
$\eta$ is an $\varepsilon'$-approximate fixed point for $\mathcal{G}_1$.  Hence by Proposition \ref{fixed_point} there is a nonzero fixed point $\zeta$, $||\zeta-\eta||<\varepsilon'$, for $\mathcal{G}_1$. By considerations above, respectively by Proposition \ref{indecomposable}\textbf{(a)}, we have
\[
    ||\zeta-\eta|| \leqslant \varepsilon'=\pi\sqrt{6} \sqrt[4]{n^7\varepsilon}\leqslant\frac{\pi}{n\sqrt{10}},\ \ \mbox{respectively}\ \ ||\eta-\xi|| \leqslant n\sqrt{n \varepsilon} \leqslant\frac{1}{60n^4}.
\]
Since $\alpha$ is $\Omega_n$-homogeneous we have that every element of $\mathcal{G}_0$ and hence also every element of $\mathcal{G}$ is a scalar multiple of an element of $\mathcal{G}_1$.  Hence $\zeta$ is a common eigenvector for $\mathcal{G}$. The above two estimates combined give the desired inequality $||\xi-\zeta||<\dfrac{1}{n}$.
 \end{proof}


\section{Block decompositions}\label{sec:decomp} 

Let $\mathcal{G}$ be a group of unitary $n\times n$ matrices with a weak $\varepsilon$-approximate point $\xi$. Assume that $\mathcal{G}$ is either block diagonal or block monomial with respect to orthogonal decomposition (for {the} definition of a block-monomial group {we refer the reader to} the first paragraph of \cite[Section 2.2]{MasRad})
\[
    \mathcal{V}=\mathcal{V}_1\oplus \mathcal{V}_2 \oplus\ldots\oplus \mathcal{V}_k.
\]
Here, the spaces $\mathcal{V}_i$ may be of different dimensions. We do not assume that $\mathcal{G}$ is irreducible. In particular, if $\mathcal{G}$ is block-diagonal, it can only be irreducible if $k=1$.
For $i=1,\ldots, k$ we write $n_i= \mathrm{dim}(\mathcal{V}_i)$, and we let $\xi_i$ be the $i$-th component of $\xi$ with respect to this decomposition.
Furthermore, introduce
$$
\mathcal{G}_{ii}=\left\{G|_{\mathcal{V}_i}\,|\, G\in\mathcal{G}, G(\mathcal{V}_i)\subseteq \mathcal{V}_i\right\}
\subseteq M_{n_i}(\complex);
$$
we view the groups $\mathcal{G}_{ii}$ either as diagonal blocks of those members of $\mathcal{G}$ for which $\mathcal{V}_i$ is invariant or as groups of linear mappings from $\mathcal{V}_i$ to itself. Clearly, for some $i\in\{1, \ldots, k\}$ we have that $\xi_i\not=0$, since the sum of the squares of their norms equals 1.

The following lemma is useful in both situations described above.

\begin{lemma}\label{eigen}
  \begin{description}
    \item[(a)] If $\xi_i\not=0$ for some $i\in\{1, \ldots, k\}$ and $\varepsilon_i= \displaystyle \frac{\varepsilon} {||\xi_i||^2}$, then $\displaystyle \widetilde{\xi}_i= \frac{\xi_i}{||\xi_i||}$ is a weak $\varepsilon_i$-approximate fixed point for $\mathcal{G}_{ii}$.
    \item[(b)] Let $a_1,\ldots, a_k$ be non-negative real numbers such that $a_1+\ldots+a_k=1$.  Then there exists an $i\in\{1,\ldots, k\}$ such that $a_i\ge (n_i/n)$.
    \item[(c)] There always exists an $i\in\{1, \ldots, k\}$ such that $\displaystyle \widetilde{\xi}_i= \frac{\xi_i} {||\xi_i||} \neq0$ is a weak $\widetilde{\varepsilon}$-approximate fixed point for $\mathcal{G}_{ii}$, where $\displaystyle \widetilde{\varepsilon} = \frac{n}{n_i}\varepsilon$.
  \end{description}

\end{lemma}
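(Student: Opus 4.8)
The plan is to prove the three parts of Lemma~\ref{eigen} in sequence, since each is elementary and (b) feeds directly into (c).

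\textbf{Part (a).} First I would unravel the definitions. Fix $i$ with $\xi_i\neq 0$ and let $H\in\mathcal{G}_{ii}$, so $H=G|_{\mathcal{V}_i}$ for some $G\in\mathcal{G}$ with $G(\mathcal{V}_i)\subseteq\mathcal{V}_i$. Since $\mathcal{G}$ is block-diagonal or block-monomial with respect to the orthogonal decomposition $\mathcal{V}=\bigoplus_j\mathcal{V}_j$, and $\mathcal{V}_i$ is $G$-invariant, the relation $\langle G\xi,\xi\rangle=\sum_{j}\langle G\xi|_{\mathcal{V}_j\text{-part}},\xi_j\rangle$ must be examined carefully: writing $\xi=\sum_j\xi_j$, the block structure guarantees $\langle G\xi_i,\xi_\ell\rangle=0$ unless $\ell=i$ (because $G\xi_i\in\mathcal{V}_i$ and the $\mathcal{V}_j$ are mutually orthogonal), so the $i$-th block contributes exactly $\langle H\xi_i,\xi_i\rangle$. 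The cross terms from $j\neq i$ contribute some complex number $c$, and we only need a one-sided estimate. Applying the reverse triangle inequality to the weak-approximate-fixed-point condition $|\langle G\xi,\xi\rangle|\geq 1-\varepsilon$ and bounding the non-$i$ contributions by $\|\xi\|^2-\|\xi_i\|^2$ (using that $G$ is unitary, hence norm-preserving on each relevant piece), one gets $|\langle H\xi_i,\xi_i\rangle|\geq (1-\varepsilon)-(1-\|\xi_i\|^2)=\|\xi_i\|^2-\varepsilon$. Dividing by $\|\xi_i\|^2$ yields $|\langle H\widetilde\xi_i,\widetilde\xi_i\rangle|\geq 1-\varepsilon/\|\xi_i\|^2=1-\varepsilon_i$, which is exactly the claim. (One must also check $\varepsilon_i<1$ — but if $\varepsilon_i\geq 1$ the conclusion is vacuous since $|\langle H\widetilde\xi_i,\widetilde\xi_i\rangle|\geq 0\geq 1-\varepsilon_i$, so this is not an obstruction.)

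\textbf{Part (b).} This is a pigeonhole argument: if $a_j<n_j/n$ for every $j$, then $1=\sum_j a_j<\sum_j n_j/n=n/n=1$, a contradiction. So some $i$ satisfies $a_i\geq n_i/n$.

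\textbf{Part (c).} Combine (a) and (b). Set $a_j=\|\xi_j\|^2$; these are non-negative and sum to $\|\xi\|^2=1$. By (b) pick $i$ with $\|\xi_i\|^2\geq n_i/n$; in particular $\xi_i\neq 0$ (as $n_i\geq 1$). By (a), $\widetilde\xi_i$ is a weak $\varepsilon_i$-approximate fixed point for $\mathcal{G}_{ii}$ with $\varepsilon_i=\varepsilon/\|\xi_i\|^2\leq \varepsilon/(n_i/n)=(n/n_i)\varepsilon=\widetilde\varepsilon$. Since a weak $\varepsilon_i$-approximate fixed point is automatically a weak $\widetilde\varepsilon$-approximate fixed point whenever $\varepsilon_i\leq\widetilde\varepsilon$ (the inequality $|\langle H\widetilde\xi_i,\widetilde\xi_i\rangle|\geq 1-\varepsilon_i\geq 1-\widetilde\varepsilon$ is immediate), this finishes the proof.

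The only genuinely delicate point is the block-structure bookkeeping in part (a): one has to be careful that in the block-monomial case a single $G$ need not leave $\mathcal{V}_i$ invariant, but $\mathcal{G}_{ii}$ is defined precisely to consist of those restrictions $G|_{\mathcal{V}_i}$ for which it does, so the orthogonality of the remaining components is valid for exactly those $G$ that matter, and the estimate $\bigl|\,|\langle G\xi,\xi\rangle|-|\langle H\xi_i,\xi_i\rangle|\,\bigr|\leq\sum_{j\neq i}\|\xi_j\|^2=1-\|\xi_i\|^2$ goes through. Everything else is routine.
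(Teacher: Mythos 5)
Your proof is correct and follows essentially the same route as the paper: compute $\langle G\xi,\xi\rangle=\langle H\xi_i,\xi_i\rangle+\sum_{j\neq i}\langle G\xi_j,\xi_{\pi_j}\rangle$ for the induced permutation $\pi$ of the non-$i$ blocks, bound the cross terms, then use the pigeonhole statement (b) to pick the good block for (c). The one step you leave implicit in part (a) is how $\sum_{j\neq i}\|\xi_j\|\,\|\xi_{\pi_j}\|\leq\sum_{j\neq i}\|\xi_j\|^2$ is obtained — the paper invokes the Rearrangement Inequality here (Cauchy--Schwarz works just as well) — but the claimed estimate is correct.
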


\begin{proof}
  \textbf{(a)} 
  Let $G\in\mathcal{G}$ be such that $G(\mathcal{V}_i)\subseteq \mathcal{V}_i$ and let $H=G|_{\mathcal{V}_i}$.  For $j\not=i$, let $\pi_j\not=i$ be the unique index such that $G(\mathcal{V}_j)\subseteq \mathcal{V}_{\pi_j}$.
    Then
  \begin{equation*}
    \begin{split}
     (1-\varepsilon) & \leqslant |\langle  G\xi,\xi\rangle| \leqslant \left|\langle H\xi_i,\xi_i\rangle + \sum_{j\not=i} \langle G\xi_j,\xi_{\pi_j}\rangle \right|\\
     &\leqslant |\langle H\xi_i, \xi_i\rangle|+\sum_{j\not=i} ||\xi_j||\cdot ||\xi_{\pi_j}|| \\
     &\leqslant |\langle H\xi_i, \xi_i\rangle|+\sum_{j\not=i} ||\xi_j||^2\\
       &= |\langle H\xi_i, \xi_i\rangle|+1-||\xi_i||^2
  \end{split}
  \end{equation*}
  (we used the Rearrangement Inequality mentioned at the beginning of Section 3 to go from the second to the third line).
  Hence $|\langle H\xi_i, \xi_i\rangle|\geqslant ||\xi_i||^2-\varepsilon$ and therefore
  \[
  |\langle H\widetilde{\xi_i}, \widetilde{\xi}_i\rangle|\geqslant 1-\frac{\varepsilon}{||\xi_i||^2} = 1-\varepsilon_i.
  \]
  \textbf{(b)} Suppose, if possible, that for every $i$ we have $a_i<n_i/n$.  Then $a_1+\ldots+a_n<(n_1/n)+\ldots+(n_k/n)=1$.  A contradiction.\\ \textbf{(c)} Immediate consequence of \textbf{(a)} and \textbf{(b)}.
\end{proof}

\begin{theorem}\label{monomial group}
Let $n\in\mathbb{N}$ and $\varepsilon>0$ be given. If $n\geqslant 2$ and $\displaystyle\varepsilon<\frac{1}{3600 n^{11}}$, then every monomial unitary group $\mathcal{G} \subseteq M_n(\complex)$ with a weak $\varepsilon$-approximate fixed point $\xi$ 
has a common eigenvector $\zeta$.
\end{theorem}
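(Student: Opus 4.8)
The plan is to reduce the general monomial case to the indecomposable one already handled in Theorem~\ref{reducibility}. First I would decompose $\complex^n$ according to the orbits of the permutation group $|\mathcal{G}|$ acting on the standard basis vectors: if $O_1,\dots,O_k$ are these orbits and $\mathcal{V}_i=\mathrm{span}\{e_j : j\in O_i\}$, then each $\mathcal{V}_i$ is invariant under every $G\in\mathcal{G}$, since $|G|$ permutes each orbit onto itself. Thus $\mathcal{G}$ is block diagonal with respect to $\mathcal{V}=\mathcal{V}_1\oplus\cdots\oplus\mathcal{V}_k$. Writing $n_i=\dim\mathcal{V}_i$, the restriction group $\mathcal{G}_{ii}=\{G|_{\mathcal{V}_i} : G\in\mathcal{G}\}\subseteq M_{n_i}(\complex)$ is again a monomial unitary group, and it acts transitively on the coordinates of $\mathcal{V}_i$. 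A transitive monomial group has no proper nonzero invariant coordinate subspace (such a subspace is necessarily spanned by a union of orbits), so $\mathcal{G}_{ii}$ is indecomposable.

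Next I would apply Lemma~\ref{eigen}\textbf{(c)} to this block-diagonal decomposition: there is an index $i$ for which $\widetilde\xi_i=\xi_i/\|\xi_i\|\neq 0$ is a weak $\widetilde\varepsilon$-approximate fixed point of $\mathcal{G}_{ii}$, where $\widetilde\varepsilon=(n/n_i)\varepsilon$. Since $n_i\leqslant n$, we obtain
\[
 \widetilde\varepsilon=\frac{n}{n_i}\,\varepsilon<\frac{n}{n_i}\cdot\frac{1}{3600\,n^{11}}=\frac{1}{3600\,n_i\,n^{10}}\leqslant\frac{1}{3600\,n_i^{11}},
\]
so $\widetilde\varepsilon$ meets the hypothesis of Theorem~\ref{reducibility} on $\complex^{n_i}$.

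If $n_i\geqslant 2$, Theorem~\ref{reducibility} applied to the indecomposable monomial unitary group $\mathcal{G}_{ii}$ with the weak $\widetilde\varepsilon$-approximate fixed point $\widetilde\xi_i$ yields a common eigenvector $\zeta\in\mathcal{V}_i$ of $\mathcal{G}_{ii}$; if $n_i=1$ then $\mathcal{G}_{ii}$ consists of scalars and $\zeta=\widetilde\xi_i$ already works. Finally, since every $G\in\mathcal{G}$ leaves $\mathcal{V}_i$ invariant, $G\zeta=(G|_{\mathcal{V}_i})\zeta$ is a scalar multiple of $\zeta$, so $\zeta$, viewed as a vector in $\complex^n$, is a common eigenvector of $\mathcal{G}$, as required.

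The only content beyond bookkeeping is the observation that a transitive monomial group is indecomposable and that orbit subspaces are invariant under the whole group, so that the eigenvector produced on one block lifts back to $\complex^n$; these are standard, and the numeric check that $\widetilde\varepsilon$ still falls below $1/(3600\,n_i^{11})$ is immediate from $n_i\leqslant n$. I therefore do not expect a genuine obstacle: the theorem follows by combining Lemma~\ref{eigen} with Theorem~\ref{reducibility}.
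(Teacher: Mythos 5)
Your proof is correct and follows the paper's argument exactly: decompose into indecomposable (orbit) blocks, use Lemma~\ref{eigen}\textbf{(c)} to find a block whose normalized component $\widetilde\xi_i$ satisfies the hypothesis of Theorem~\ref{reducibility} on $\complex^{n_i}$, and lift the resulting common eigenvector back to $\complex^n$. You are marginally more explicit than the paper in identifying the indecomposable blocks with orbit subspaces and in separately treating the $n_i=1$ case (where Theorem~\ref{reducibility} formally requires $n\geqslant 2$), but this is purely a matter of presentation.
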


 \begin{proof}
If $\mathcal{G}\subseteq M_n(\complex)$ is indecomposable, we are done by Theorem \ref{reducibility}. Otherwise, decompose the space and the group into indecomposable components, and then also the corresponding weak $\varepsilon$-approximate fixed point $\xi$. By Lemma \ref{eigen}\textbf{(c)}
we choose an $i\in\{1, \ldots, k\}$ such that $\displaystyle \widetilde{\xi}_i= \frac{\xi_i} {||\xi_i||} \neq0$ is a weak $\widetilde{\varepsilon}$-approximate fixed point for $\mathcal{G}_{ii}$, where $\displaystyle \widetilde{\varepsilon} = \frac{n}{n_i}\varepsilon$, and $n_i$ is the dimension of the underlying space of the group $\mathcal{G}_{ii}$. We conclude {that}
\[
    \widetilde{\varepsilon}=\frac{n}{n_i}\varepsilon \leqslant \frac{n}{3600 n_i n^{11}} = \frac{1}{3600 n_i n^{10}} \leqslant\frac{1}{3600 n_i^{11}},
\]
so that $\mathcal{G}_{ii}$ has a common eigenvector by Theorem \ref{reducibility}, denoted by $\zeta$. Let $\widehat{\zeta}$ be the vector of the starting space whose $i$-th component equals $\zeta$ and the rest of whose components are zero. Clearly, this is a common eigenvector for $\mathcal{G}$.
 \end{proof}

\begin{remark}
  \emph{We will need the following observation about decompositions.
Assume a decomposition of a finite-dimensional vector space
$\mathcal{V}= \complex^n$ into a direct sum
$$
\mathcal{V}=\mathcal{V}_1\dotplus\mathcal{V}_2\dotplus\cdots \dotplus\mathcal{V}_r.
$$
Let a unitary group $\mathcal{N}$ be block diagonal with respect to this decomposition.
Then we may assume, with no loss of generality, that this sum is orthogonal. Indeed, choose any basis respecting this decomposition, and then orthogonalize this basis using the Gram-Schmidt process.  Clearly $\mathcal{N}$ is still upper-triangular with respect to the associated ``new", now orthogonal, decomposition
$$
\mathcal{V}= \mathcal{V}'_1 \oplus\mathcal{V}'_2\oplus\ldots \oplus \mathcal{V}'_r;
$$
since $\mathcal{N}$ is unitary we conclude that $\mathcal{N}$ must actually be block-diagonal with respect to this orthogonal decomposition as well. }
\end{remark}

\section{General finite groups}\label{sec:gen}

In the proof of the following theorem we need Clifford's theorem. A version of it that is close to our point of view is given in \cite[Theorem 2.3]{MasRad}.

\begin{theorem}\label{general finite group}
Let $n\in\mathbb{N}$ and $\varepsilon>0$ be given. If $n\geqslant 2$ and $\displaystyle\varepsilon<\frac{1}{3600 n^{11}}$, then every finite unitary  group $\mathcal{G}\subseteq M_n(\complex)$ with a weak $\varepsilon$-approximate fixed point is reducible.
\end{theorem}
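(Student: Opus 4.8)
The plan is to reduce the general finite case to the monomial case (Theorem~\ref{monomial group}) by induction on $n$, using Clifford's theorem to locate a normal subgroup that is block-monomial. First I would dispose of the trivial bases: if $n=1$ the group is scalar and every vector is an eigenvector, so assume $n\geqslant 2$ and suppose, for contradiction, that $\mathcal{G}$ is irreducible with a weak $\varepsilon$-approximate fixed point $\xi$, where $\varepsilon<\frac{1}{3600 n^{11}}$. As in the earlier proofs, assume with no loss that $\mathcal{G}$ is compact (here it is already finite) and work with $\mathcal{G}_0=\{G\in\mathcal{G}:\det G=1\}$ if convenient; the key structural input is that a finite irreducible $\mathcal{G}$, if it is not already monomial, has a proper nontrivial normal subgroup whose restriction to the isotypic decomposition it induces is block-diagonal, and the conjugation action of $\mathcal{G}$ permutes the isotypic blocks, i.e.\ $\mathcal{G}$ is block-monomial with respect to an orthogonal decomposition $\mathcal{V}=\mathcal{V}_1\oplus\cdots\oplus\mathcal{V}_k$ with $k>1$ (if no such proper normal subgroup gives $k>1$, one is in the monomial/abelian situation already handled, or $\mathcal{G}$ acts irreducibly on a space where a smaller-dimensional argument applies). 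This is exactly the setup of Section~\ref{sec:decomp}.

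Next I would apply Lemma~\ref{eigen}\textbf{(c)}: there is an index $i$ with $n_i=\dim\mathcal{V}_i<n$ such that $\widetilde{\xi}_i=\xi_i/\|\xi_i\|\neq 0$ is a weak $\widetilde{\varepsilon}$-approximate fixed point for the compression group $\mathcal{G}_{ii}\subseteq M_{n_i}(\complex)$, with $\widetilde{\varepsilon}=(n/n_i)\varepsilon$. As in the proof of Theorem~\ref{monomial group}, the bound survives the passage to the smaller space:
\[
\widetilde{\varepsilon}=\frac{n}{n_i}\varepsilon<\frac{n}{3600\, n_i\, n^{11}}=\frac{1}{3600\, n_i\, n^{10}}\leqslant\frac{1}{3600\, n_i^{11}},
\]
since $n_i\leqslant n$. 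Now $\mathcal{G}_{ii}$ is a finite unitary group on $\complex^{n_i}$ with $n_i<n$ and a weak $\widetilde{\varepsilon}$-approximate fixed point, so by the inductive hypothesis (the theorem for dimensions smaller than $n$) $\mathcal{G}_{ii}$ is reducible: it has a common eigenvector $\zeta_i\in\mathcal{V}_i$. The base case $n_i=1$ is trivial, and $n_i\geqslant 2$ with $n_i<n$ falls under the induction; the monomial case (including $k=1$ block-diagonal forcing irreducibility, which is impossible, or $\mathcal{G}$ monomial outright) is covered by Theorem~\ref{monomial group}.

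It then remains to lift a common eigenvector of $\mathcal{G}_{ii}$ on $\mathcal{V}_i$ to an invariant subspace of $\mathcal{G}$ on $\mathcal{V}$. Since $\mathcal{G}$ is block-monomial and the chosen block $\mathcal{V}_i$ carries a $\mathcal{G}_{ii}$-invariant line $\complex\zeta_i$, one forms the span of the $\mathcal{G}$-orbit of $\zeta_i$: because $\mathcal{G}$ permutes the blocks $\mathcal{V}_j$ and acts on the stabilizer of $\mathcal{V}_i$ through $\mathcal{G}_{ii}$ (up to the block structure), this orbit-span is a $\mathcal{G}$-invariant subspace of dimension at most $k<n$, hence proper and nontrivial, contradicting the assumed irreducibility of $\mathcal{G}$. (Alternatively, one invokes Clifford's theorem directly: the $\mathcal{G}_{ii}$-invariant line sits inside an isotypic component, and inducing up produces a proper invariant subspace.) The main obstacle I anticipate is precisely this last bookkeeping step — correctly extracting from Clifford's theorem (in the form of \cite[Theorem 2.3]{MasRad}) that an irreducible finite unitary group which is not monomial is genuinely block-monomial with $k>1$ over an orthogonal decomposition, and then checking that a common eigenvector in one block induces a proper invariant subspace upstairs; the numerical estimate, by contrast, is immediate and needs no care beyond $n_i\leqslant n$.
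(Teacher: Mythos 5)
Your reduction via Clifford's theorem and Lemma~\ref{eigen}\textbf{(c)} is the right first move, and the numerical estimate $\widetilde{\varepsilon}\leqslant\frac{1}{3600 n_i^{11}}$ is correct. The orbit-span argument at the end is also sound, once one has a genuine block-monomial structure with $k>1$ blocks. But the plan has a serious gap exactly where you flag an ``anticipated obstacle,'' and the parenthetical escape clause does not close it.

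The gap: you assume that an irreducible finite unitary $\mathcal{G}$ that is not monomial always possesses a proper normal subgroup whose Clifford decomposition has $k>1$ isotypic blocks. That is false. If $\mathcal{G}$ is (a central extension of) a nonabelian finite simple group acting irreducibly --- e.g.\ the $3$-dimensional irreducible representation of $A_5$, which is not monomial since $A_5$ has no subgroup of index $3$ --- then every proper normal subgroup of $\mathcal{G}$ is scalar, and the corresponding Clifford decomposition is trivial: one isotypic component, $k=1$, no block structure to induct on. Your parenthetical ``(if no such proper normal subgroup gives $k>1$, one is in the monomial/abelian situation already handled\dots)'' is exactly the wrong conclusion: such a $\mathcal{G}$ is neither monomial nor abelian, and no smaller-dimensional reduction is available. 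An induction on $n$ alone cannot reach this case.

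The paper's proof closes this gap differently. It runs a double minimality (smallest $n$, then smallest order) to prove that every proper normal subgroup of a putative minimal counterexample is scalar --- that part of the argument parallels your induction and your use of Lemma~\ref{eigen}\textbf{(c)}. But then it takes a maximal proper normal subgroup $\mathcal{N}$ (scalar) and case-splits on $\mathcal{G}/\mathcal{N}$: if $\mathcal{G}/\mathcal{N}$ is abelian, Suprunenko's theorem \cite[Proposition~3.1]{MasRad} makes $\mathcal{G}$ monomializable and Theorem~\ref{monomial group} finishes; if $\mathcal{G}/\mathcal{N}$ is a nonabelian simple group, the Ore conjecture \cite{LieOBrShaTie} shows that every element of $\mathcal{G}$ is a scalar times a commutator, and then Theorem~\ref{product with scalar} finishes. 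Your proposal never invokes Theorem~\ref{product with scalar} or the commutator machinery of Section~\ref{sec:comm}, so it has no way to dispose of the simple-quotient branch. Without that branch, the argument is incomplete.
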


 \begin{proof}
   We will prove this by contradiction. Assume the contrary, so that there exist $n\geqslant2$, $\varepsilon>0$ with $\displaystyle\varepsilon <\frac{1}{3600 n^{11}}$, and an irreducible finite unitary group $\mathcal{G}\ \subseteq M_n(\complex)$ with a weak $\varepsilon$-approximate fixed point $\xi$. Choose the smallest possible $n$ for which such a group exists. From among all such groups choose one of the smallest possible order, and call it $\mathcal{G}$.

We want to show that every proper normal subgroup $\mathcal{N}$ of $\mathcal{G}$ consists of scalars only. Since $\mathcal{N}$ is strictly smaller than $\mathcal{G}$ and has $\xi$ as a weak $\varepsilon$-approximate fixed point, it is reducible by assumption, so we can use \cite[Theorem 2.3]{MasRad}. Then $\mathcal{V}=\complex^n$ decomposes into a direct sum
$$
\mathcal{V}=\mathcal{V}_1\oplus\mathcal{V}_2\oplus\cdots\oplus\mathcal{V}_r,
$$
of invariant subspaces for $\mathcal{N}$ maximal with the property (when translated from the group-representations language) that for any $i=1,\ldots,r$ and for any $\mathcal{N}$-irreducible subspaces $\mathcal{U}, \mathcal{U}'\subseteq \mathcal{V}_i$ the restrictions $\mathcal{N} |_{\mathcal{U}}$ and $\mathcal{N}|_{\mathcal{U}'}$ viewed as groups of linear maps from $\mathcal{U}$ to itself and from $\mathcal{U}'$ to itself are spatially isomorphic. By the cited theorem the spaces $\mathcal{V}_i$ are of equal dimension, necessarily $\widetilde{n}= \dfrac{n}{r}$. Furthermore, since $\mathcal{N}$ is unitary, we can assume that the decomposition above is orthogonal by the remark of Section \ref{sec:decomp}.
We decompose $\xi$ according to this decomposition denoting the components by $\xi_i$ and choose an $i\in\{1, \ldots, k\}$ by Lemma \ref{eigen}\textbf{(c)} such that $\displaystyle \widetilde{\xi}= \frac{\xi_i} {||\xi_i||} \neq0$ is a weak $\tilde{\varepsilon}$-approximate fixed point for $\widetilde{\mathcal{G}} =\mathcal{G}_i$, where $\displaystyle \tilde{\varepsilon}= \frac{n}{n_i}\varepsilon = r\varepsilon$. Since
\[
    \widetilde{\varepsilon}=r\varepsilon = \frac{n}{3600 \widetilde{n} n^{11}} = \frac{1}{3600 \widetilde{n} n^{10}} \leqslant \frac{1}{3600 \widetilde{n}^{11}},
\]
we have found a group $\widetilde{\mathcal{G}}$ that satisfies the assumptions of the theorem with $\widetilde{n}$ and $\widetilde{\varepsilon}$ instead of $n$ and $\varepsilon$. The possibility $n>\widetilde{n}$ contradicts the starting assumption of this proof, thus proving that $r=1$.

The decomposition of $\mathcal{V}$ into irreducible invariant subspaces of $\mathcal{N}$
\[
    \mathcal{V}= \mathcal{U}_1 \oplus\mathcal{U}_2\oplus\ldots \oplus \mathcal{U}_s,
\]
may be assumed orthogonal with no loss as above. Observe that the groups $\mathcal{N}_i= \mathcal{N} |_{\mathcal{U}_i}$ are all spatially isomorphic. Since they are strictly smaller than $\mathcal{G}$ and satisfy the assumptions of the theorem, these blocks are reducible by the starting assumption of this proof, so that $n/s=1$. Consequently, the spatially isomorphic groups $\mathcal{N}_i$ act on one-dimensional spaces implying that $\mathcal{N}$ is scalar.


We know now that every proper normal subgroup $\mathcal{N}$ of $\mathcal{G}$ consists of scalars only. If for such a maximal $\mathcal{N}$, the group $\mathcal{G}/\mathcal{N}$ is commutative, then by Suprunenko's theorem \cite[Proposition 3.1]{MasRad} (cf.\ also \cite[Theorem 24, p.\ 60]{Sup}) $\mathcal{G}$ is unitarily monomializable, and we come to a contradiction with the irreducibility of $\mathcal{G}$ using Theorem \ref{monomial group}. It remains to consider the case that $\mathcal{G}/\mathcal{N}$ is a noncommutative simple group. The longstanding Ore conjecture in which every element of every finite (non-abelian) simple group is a commutator, was proved to be true in \cite{LieOBrShaTie}. We apply this result to this quotient group to see that every element $G\in\mathcal{G}$ is of the form
$G=\lambda[A,B]$ for some $A,B\in\mathcal{G}$ and a scalar $\lambda$. This brings us to a contradiction with the irreducibility of $\mathcal{G}$ by Theorem \ref{product with scalar}, and we are done.
 \end{proof}

\section{ Connected groups of unitary matrices }\label{sec:con}

\begin{lemma}\label{connected} If $\mathcal{G}$ is a connected compact group of unitary matrices, then every element of its derived subgroup $\mathcal{G}'$, i.e.\ the closed subgroup generated by its commutators, is a commutator of two elements of $\mathcal{G}$.
\end{lemma}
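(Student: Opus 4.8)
The plan is to use the structure theory of connected compact Lie groups together with the fact that a connected compact group of matrices is a Lie group. First I would reduce to the Lie group setting: any compact group of unitary $n\times n$ matrices is a closed subgroup of $U(n)$, hence a compact Lie group, and connectedness is preserved. So $\mathcal{G}$ is a connected compact Lie group, and its derived subgroup $\mathcal{G}'$ (the closed subgroup generated by commutators) is again a connected compact Lie group. The key classical input is the structure of connected compact Lie groups: $\mathcal{G}$ is isogenous to a product $Z_0 \times K$, where $Z_0$ is the identity component of the center (a torus) and $K$ is a semisimple connected compact Lie group; more precisely $\mathcal{G} = Z_0 \cdot \mathcal{G}'$ with $\mathcal{G}'$ semisimple, and $\mathcal{G}'$ equals the commutator subgroup which is already closed.

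The heart of the matter is then the following well-known theorem (attributed to Gotô): in a connected compact (equivalently, connected semisimple compact) Lie group $K$, every element is a single commutator $[a,b]$ with $a,b\in K$. I would invoke this for $K=\mathcal{G}'$, which is connected, compact, and semisimple. This immediately gives that every element of $\mathcal{G}'$ is a commutator of two elements \emph{of $\mathcal{G}'$}, hence a fortiori of two elements of $\mathcal{G}$, which is exactly the claim. The one subtlety is that Gotô's theorem is usually stated for semisimple groups, so I should first argue that $\mathcal{G}'$ is semisimple: its Lie algebra is $[\mathfrak{g},\mathfrak{g}]$, which for a compact Lie algebra $\mathfrak{g}$ is semisimple (the compact Lie algebra splits as center plus semisimple part, and the derived algebra is precisely the semisimple part). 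Then $\mathcal{G}'$, being connected with semisimple Lie algebra and compact, is a semisimple compact Lie group, so Gotô applies.

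The main obstacle — really the only nontrivial point — is citing the surjectivity of the commutator map on a connected compact semisimple Lie group; this is not elementary and relies on Gotô's theorem (M. Gotô, \emph{A theorem on compact semi-simple groups}, J. Math. Soc. Japan, 1949), or one can cite the more general result that in any connected compact Lie group every element of the derived group is a commutator. If a self-contained argument is wanted, one can reduce via a maximal torus: every element of $K$ is conjugate into a maximal torus $T$, and using the fact that $T$ is generated by its intersections with root $SU(2)$-subgroups together with an explicit computation that every element of a maximal torus of $SU(2)$ is a commutator in $SU(2)$, one patches these together — but this is more delicate to make rigorous in general rank, so invoking Gotô's theorem is the cleanest route. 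I would therefore structure the proof as: (1) $\mathcal{G}$ is a connected compact Lie group; (2) $\mathcal{G}'$ is a connected compact semisimple Lie group; (3) apply Gotô's theorem to conclude every element of $\mathcal{G}'$ is a commutator of two elements of $\mathcal{G}'\subseteq\mathcal{G}$.
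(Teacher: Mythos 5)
Your proposal matches the paper's proof: both argue that $\mathcal{G}'$ is a connected compact semisimple Lie group and then invoke the surjectivity of the commutator map on such groups. The paper cites Bourbaki (Groupes et alg\`ebres de Lie, Ch.~9, \S 4, No.~5, Prop.~10, Cor.) and Tauvel--Yu for the semisimplicity of $\mathcal{G}'$, whereas you cite Got\^o's theorem, but these are the same result and the argument is essentially identical.
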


 \begin{proof}
   Since $\mathcal{G}'$ is a semisimple (cf.\ \cite[Corollary 20.5.5]{TaYu}) connected compact Lie group, the claim is an immediate consequence of \S 4, No.\ 5, Proposition 10, Corollary of \cite{Bou}.
 \end{proof}

\begin{proposition}\label{connected group}
Let $n\in\mathbb{N}$ and $\varepsilon>0$ be given. If $n\geqslant 2$ and $\displaystyle\varepsilon<\frac{1}{32},
$ then every connected compact unitary group $\mathcal{G}\subseteq M_n(\complex)$ with a weak $\varepsilon$-approximate fixed point is reducible.
\end{proposition}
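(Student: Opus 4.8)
The plan is to reduce the statement to the commutator machinery of Section~\ref{sec:comm}. If $\mathcal{G}$ is reducible there is nothing to do, so I may keep irreducibility in mind as the case of interest. Let $\mathcal{G}'$ be the derived subgroup, i.e.\ the closed subgroup generated by the commutators $[A,B]$ with $A,B\in\mathcal{G}$; as a closed subgroup of a compact group it is itself compact. The structural input is Lemma~\ref{connected}: because $\mathcal{G}$ is connected and compact, \emph{every} element of $\mathcal{G}'$ is a single commutator of two elements of $\mathcal{G}$.

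Next I would transport the hypothesis to $\mathcal{G}'$. Since $\xi$ is a weak $\varepsilon$-approximate fixed point of $\mathcal{G}$ it is one for every subgroup, and together with Lemma~\ref{connected} this places us exactly in the setting of Lemma~\ref{referee}, which yields $\|G\xi-\xi\|\le 4\sqrt{2\varepsilon}$ for \emph{all} $G\in\mathcal{G}'$. Thus $\xi$ is an $\varepsilon'$-approximate fixed point of $\mathcal{G}'$ with $\varepsilon'=4\sqrt{2\varepsilon}$, and the hypothesis $\varepsilon<\frac{1}{32}$ is precisely what forces $\varepsilon'<1$. Proposition~\ref{fixed_point} applied to the compact group $\mathcal{G}'$ then produces a nonzero fixed point $\eta$ of $\mathcal{G}'$ with $\|\eta-\xi\|\le\varepsilon'<1$ (here $\varepsilon'<1$ is what guarantees $\eta\ne 0$, as $\|\eta\|\ge 1-\varepsilon'$).

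Finally I would promote $\eta$ to an invariant subspace for $\mathcal{G}$ using that $\mathcal{G}'$ is normal in $\mathcal{G}$. Put $\mathcal{W}=\{v\in\complex^n : Nv=v \text{ for all } N\in\mathcal{G}'\}$; this is a nonzero subspace since $\eta\in\mathcal{W}$, and it is $\mathcal{G}$-invariant because for $G\in\mathcal{G}$, $v\in\mathcal{W}$, $N\in\mathcal{G}'$ one has $N(Gv)=G(G^{-1}NG)v=Gv$ with $G^{-1}NG\in\mathcal{G}'$. If $\mathcal{W}\ne\complex^n$ then $\mathcal{W}$ is a nontrivial proper invariant subspace, so $\mathcal{G}$ is reducible; if $\mathcal{W}=\complex^n$ then $\mathcal{G}'=\{I\}$, i.e.\ $\mathcal{G}$ is abelian, and a commuting family of unitaries on $\complex^n$ with $n\ge 2$ is simultaneously diagonalizable, hence again reducible. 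I do not anticipate a genuine obstacle here: the argument is bookkeeping that glues the Lie-theoretic Lemma~\ref{connected}, the commutator estimate of Lemma~\ref{referee}, and the averaging of Proposition~\ref{fixed_point}. The only points that need care are checking that Lemma~\ref{referee} really applies (the commutators supplied by Lemma~\ref{connected} lie in the ambient group $\mathcal{G}$, and $\xi$ is a weak approximate fixed point of that same $\mathcal{G}$), tracking the constant so that $4\sqrt{2\varepsilon}<1$, and the normality step together with the harmless abelian degeneracy.
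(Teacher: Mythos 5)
Your proof is correct and follows essentially the same route as the paper: invoke Lemma~\ref{connected} to write every element of $\mathcal{G}'$ as a single commutator of elements of $\mathcal{G}$, push $\xi$ through the commutator estimate of Lemma~\ref{referee} (this is in fact slightly cleaner than the paper's invocation of Corollary~\ref{bla}, whose statement literally asks for commutators from the same group), average with Haar via Proposition~\ref{fixed_point} to produce a nonzero fixed vector $\eta$ of $\mathcal{G}'$, and then use normality of $\mathcal{G}'$ to promote this to a $\mathcal{G}$-invariant subspace. The only divergence is the final step: the paper works with the cyclic subspace $\mathcal{V}=\mathcal{A}\eta$, where $\mathcal{A}$ is the algebra generated by $\mathcal{G}$, and uses the commutation identity $GHA\eta=HGA\eta$ to argue that $\mathcal{V}=\complex^n$ would force $\mathcal{G}$ to be abelian; you instead take $\mathcal{W}$ to be the full common fixed subspace of $\mathcal{G}'$, which is $\mathcal{G}$-invariant directly by conjugation, and observe that $\mathcal{W}=\complex^n$ forces $\mathcal{G}'=\{I\}$ and hence $\mathcal{G}$ abelian. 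Both invariant subspaces do the job; your version bypasses the algebra $\mathcal{A}$ and the commutation trick, at the cost of nothing.
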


\begin{proof}
  Since commutative groups are always reducible, we may assume with no loss that $\mathcal{G}$ is non-commutative. By Lemma \ref{connected} every element of $\mathcal{G}'$ is a commutator of two elements of $\mathcal{G}$. So, by Corollary \ref{bla}, $\mathcal{G}'$ has a fixed point, say $\xi$. Denote by $\mathcal{A}$ the linear span of $\mathcal{G}$, which is the same as the algebra generated by $\mathcal{G}$. Since $\mathcal{G}'$ is a normal subgroup of $\mathcal{G}$, we have that for any three elements $G,H,A\in\mathcal{G}$ the element $A^*H^*G^*HGA$ belongs to $\mathcal{G}'$, so that it fixes the point $\xi$. It follows that
\begin{equation}\label{commutator}
    GHA\xi=HGA\xi.
\end{equation}
This relation is then true for all $A\in\mathcal{A}$. Note that
\[
    \mathcal{V}:=\{A\xi\,|\,A\in\mathcal{A}\}
\]
is a non-trivial subspace of $\complex^n$, invariant for $\mathcal{G}$. If we prove that it is a proper subspace, we are done. Assume the contrary. Then, any two elements $G,H\in\mathcal{G}$ commute on $\mathcal{V}=\complex^n$, and $\mathcal{G}$ is a commutative group, contradicting the above.
\end{proof}

\begin{corollary}\label{lemma0}
  Let $n\in\mathbb{N}$ and $\varepsilon>0$ be given. If $n\geqslant 2$ and $\displaystyle\varepsilon<\frac{1}{32n}$ then every connected compact unitary group $\mathcal{G}\subseteq M_n(\complex)$ with a weak $\varepsilon$-approximate fixed point has a common eigenvector.
\end{corollary}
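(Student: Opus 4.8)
The plan is to pass from the connected compact unitary group $\mathcal{G}$ to a new unitary group for which Proposition \ref{connected group} applies and which has the same invariant-subspace lattice refined only by scalars, thereby converting reducibility into the existence of a common eigenvector. The key point is the standard reduction: if we know that every connected compact unitary group on $\complex^m$ (for $m\geqslant 2$, with the appropriate $\varepsilon$) is reducible, then by induction on dimension it has a common eigenvector. Indeed, a reducible $\mathcal{G}$ leaves a nontrivial subspace $\mathcal{W}$ invariant; since $\mathcal{G}$ is unitary it also leaves $\mathcal{W}^\perp$ invariant, so $\mathcal{G}$ is block-diagonal with respect to $\mathcal{W}\oplus\mathcal{W}^\perp$.

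First I would apply Lemma \ref{eigen}\textbf{(c)} (in the block-diagonal case $k=2$) to the weak $\varepsilon$-approximate fixed point $\xi$: there is a block, say on $\mathcal{V}_i$ of dimension $n_i<n$, for which the normalized component $\widetilde\xi_i$ is a weak $\widetilde\varepsilon$-approximate fixed point of the restricted group $\mathcal{G}_{ii}$, with $\widetilde\varepsilon=(n/n_i)\varepsilon$. The restricted group $\mathcal{G}_{ii}$ is again connected (it is a continuous image of the connected $\mathcal{G}$ under restriction-to-a-block) and compact and unitary. One checks the $\varepsilon$-bound survives: $\widetilde\varepsilon=(n/n_i)\varepsilon<(n/n_i)\cdot\frac{1}{32n}=\frac{1}{32n_i}$, which is exactly the hypothesis of the corollary with $n_i$ in place of $n$. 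So by induction on $n$ (the base case $n_i=1$ being trivial, since any vector is an eigenvector of a group of $1\times 1$ unitaries, and the case $n_i\geqslant 2$ being handled by the inductive hypothesis), $\mathcal{G}_{ii}$ has a common eigenvector $\zeta$; lifting it back (extending by zeros on the other blocks) gives a common eigenvector of $\mathcal{G}$.

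It remains to start the induction, i.e.\ to know that $\mathcal{G}$ is reducible in the first place when $n\geqslant 2$. If $\mathcal{G}$ is commutative it is reducible. If not, $\varepsilon<\frac{1}{32n}<\frac{1}{32}$, so Proposition \ref{connected group} applies directly and $\mathcal{G}$ is reducible. (Strictly, one runs the whole argument as a single induction on $n$: for $n\geqslant 2$, Proposition \ref{connected group} gives reducibility, block-diagonalize, and recurse.) I do not expect a genuine obstacle here; the only thing to be careful about is the bookkeeping of the constant, namely verifying that the factor $n/n_i$ absorbing into the denominator leaves us with $\frac{1}{32 n_i}$ rather than something weaker, which it does since $n_i\leqslant n$. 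The whole corollary is thus a routine descent combining Lemma \ref{eigen}\textbf{(c)} with Proposition \ref{connected group}.
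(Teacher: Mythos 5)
Your proof is correct and takes essentially the same route as the paper: apply Proposition \ref{connected group} to reduce $\mathcal{G}$, use Lemma \ref{eigen}\textbf{(c)} to pass to a block $\mathcal{G}_{ii}$ where the normalized component of $\xi$ is still a weak approximate fixed point with the bound $\widetilde\varepsilon<\frac{1}{32n_i}$, and descend. The paper compresses this into a single decomposition into irreducible blocks and concludes at once that the selected block is one-dimensional, whereas you make the descent an explicit induction on dimension; the content and the bookkeeping of the constant are identical.
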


\begin{proof}
  By Proposition \ref{connected group} the group is reducible and we are able to decompose the space, the group, and the corresponding weak $\varepsilon$-approximate fixed point $\xi$ as in Section \ref{sec:decomp}. By Lemma \ref{eigen}\textbf{(c)} there always exists an $i\in\{1, \ldots, k\}$ such that $\widetilde{\xi}_i= \dfrac{\xi_i} {||\xi_i||} \neq0$ is a weak $\widetilde{\varepsilon}$-approximate fixed point for $\mathcal{G}_{ii}$, where $\widetilde{\varepsilon}= \dfrac{n}{n_i} \varepsilon\leqslant n \dfrac{1}{32n}= \dfrac{1}{32}$. So, by Proposition \ref{connected group}, such a $\mathcal{G}_{ii}$ acts on a one-dimensinal space, whose orthonormal basis vector we denote by $\zeta$. Let $\widehat{\zeta}$ be the vector of the starting space whose $i$-th component equals $\zeta$ and the rest of whose components are zero. It is clear that this is a common eigenvector for $\mathcal{G}$.
\end{proof}

\begin{lemma}\label{lemma2}
  Let a compact unitary group $\mathcal{G}\subseteq M_n(\complex)$ be given such that $\mathcal{G}=\overline{\mathbb{T}\mathcal{G}}$ and $\mathcal{G}=\mathcal{G}_0\mathcal{N}$, where $\mathcal{N}$ is a normal connected subgroup and $\mathcal{G}_0$ is a finite subgroup of $\mathcal{G}$. If $n\geqslant 2$, $\displaystyle\varepsilon \leqslant\frac{1}{3600 n^{11}}$, and $\mathcal{G}$ has a weak $\varepsilon$-approximate fixed point $\xi$, then it is reducible.
\end{lemma}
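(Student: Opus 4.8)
The plan is to argue by contradiction, running an induction on $n$. Suppose the lemma fails, and among all instances $(n,\mathcal{G},\mathcal{G}_0,\mathcal{N},\xi)$ satisfying its hypotheses with $\mathcal{G}$ irreducible, fix one with $n$ minimal. Observe that $\xi$ is then also a weak $\varepsilon$-approximate fixed point of $\mathcal{N}$, and that $\varepsilon\le\frac{1}{3600n^{11}}<\frac{1}{32}$ since $n\ge2$. The first step is to dispose of the case in which $\mathcal{N}$ consists of scalars: then $\mathcal{G}=\overline{\mathbb{T}\mathcal{G}}\supseteq\mathbb{T}I$ gives $\mathcal{G}=\mathbb{T}\mathcal{G}_0$, so a subspace is $\mathcal{G}$-invariant if and only if it is $\mathcal{G}_0$-invariant; since $\mathcal{G}_0$ is a finite unitary group with weak $\varepsilon$-approximate fixed point $\xi$, Theorem~\ref{general finite group} makes $\mathcal{G}_0$, hence $\mathcal{G}$, reducible, a contradiction. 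So from now on $\mathcal{N}$ is non-scalar.

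By Proposition~\ref{connected group} the connected compact group $\mathcal{N}$ is reducible, and since $\mathcal{N}$ is normal in $\mathcal{G}$, Clifford's theorem \cite[Theorem 2.3]{MasRad} --- made orthogonal through the Remark of Section~\ref{sec:decomp} --- yields an orthogonal decomposition $\complex^n=\mathcal{V}_1\oplus\cdots\oplus\mathcal{V}_r$ into the homogeneous components of $\mathcal{N}$, all of one dimension $\widetilde n=n/r$, which $\mathcal{G}$ permutes (transitively, by irreducibility of $\mathcal{G}$). I would then split into cases. If $\widetilde n=1$, then $r=n$ and, choosing a unit vector in each line $\mathcal{V}_i$, $\mathcal{G}$ becomes a monomial unitary group, so it has a common eigenvector by Theorem~\ref{monomial group} --- contradicting irreducibility. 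If $\widetilde n\ge2$ but $r=1$, then $\complex^n$ is a single homogeneous component, hence $\complex^n=\mathcal{U}_1\oplus\cdots\oplus\mathcal{U}_s$ with the $\mathcal{U}_j$ irreducible, pairwise spatially isomorphic $\mathcal{N}$-submodules of dimension $n/s$; here $n/s\ge2$, for otherwise $s=n$ and the $\mathcal{U}_j$ would be lines on which $\mathcal{N}$ acts by a common character, making $\mathcal{N}$ scalar. Applying Lemma~\ref{eigen}\textbf{(c)} to the block-diagonal group $\mathcal{N}$ and this decomposition, some $\xi_j/\|\xi_j\|\ne0$ is a weak $(s\varepsilon)$-approximate fixed point of $\mathcal{N}|_{\mathcal{U}_j}$, with $s\varepsilon\le n\varepsilon<\frac{1}{32}$; as $\mathcal{N}|_{\mathcal{U}_j}$ is connected compact unitary on $\complex^{n/s}$ with $n/s\ge2$, Proposition~\ref{connected group} makes it reducible, contradicting the irreducibility of $\mathcal{U}_j$ as an $\mathcal{N}$-module.

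It remains to treat the case $\widetilde n\ge2$ and $r\ge2$, where $\widetilde n<n$ and the induction is invoked. By Lemma~\ref{eigen}\textbf{(c)} some $\xi_i/\|\xi_i\|\ne0$ is a weak $\widetilde\varepsilon$-approximate fixed point of $\mathcal{G}_{ii}=\{G|_{\mathcal{V}_i}\mid G\in\mathcal{G},\,G\mathcal{V}_i\subseteq\mathcal{V}_i\}$, where $\widetilde\varepsilon=(n/\widetilde n)\varepsilon=r\varepsilon\le\frac{r}{3600n^{11}}=\frac{1}{3600\,\widetilde n\,n^{10}}\le\frac{1}{3600\,\widetilde n^{\,11}}$. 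I would check that $\mathcal{G}_{ii}$ satisfies the hypotheses of the lemma in dimension $\widetilde n$: it is a compact unitary group on $\complex^{\widetilde n}$; it equals $\overline{\mathbb{T}\mathcal{G}_{ii}}$ because $\mathbb{T}I\subseteq\mathcal{N}\subseteq\mathrm{Stab}_{\mathcal{G}}(\mathcal{V}_i)$; and restricting the factorization $\mathcal{G}=\mathcal{G}_0\mathcal{N}$ to $\mathcal{V}_i$ (legitimate because $\mathcal{N}\subseteq\mathrm{Stab}_{\mathcal{G}}(\mathcal{V}_i)$) gives $\mathcal{G}_{ii}=\bigl((\mathcal{G}_0\cap\mathrm{Stab}_{\mathcal{G}}(\mathcal{V}_i))|_{\mathcal{V}_i}\bigr)\cdot(\mathcal{N}|_{\mathcal{V}_i})$, a product of a finite group and the normal connected subgroup $\mathcal{N}|_{\mathcal{V}_i}$. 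By minimality of $n$, $\mathcal{G}_{ii}$ is reducible, say $\mathcal{V}_i=A\oplus B$ with $A,B$ nonzero and invariant under $\mathrm{Stab}_{\mathcal{G}}(\mathcal{V}_i)$. Then $\sum_{G\in\mathcal{G}}GA$ is a $\mathcal{G}$-invariant subspace, it lies in $\bigoplus_j\mathcal{V}_j$, and it meets $\mathcal{V}_i$ in exactly $A$ (because $\mathcal{G}$ permutes the $\mathcal{V}_j$ and $GA=A$ whenever $G$ fixes $\mathcal{V}_i$); hence it is a proper nonzero $\mathcal{G}$-invariant subspace of $\complex^n$, contradicting irreducibility. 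This exhausts all cases.

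The step I expect to be the main obstacle is not the induction itself but the case $r=1$, where the homogeneous decomposition of $\mathcal{N}$ does not lower the dimension, so one cannot recurse; instead one must descend to the irreducible $\mathcal{N}$-constituents and use the \emph{connectedness} of $\mathcal{N}$ (via Proposition~\ref{connected group}) to contradict their irreducibility. A secondary point that needs care, in the case $r\ge2$, is checking that the restriction $\mathcal{G}_{ii}$ genuinely inherits the structural hypotheses --- the product ``finite times normal connected'' and the equality $\mathcal{G}_{ii}=\overline{\mathbb{T}\mathcal{G}_{ii}}$ --- so that the inductive hypothesis applies to it.
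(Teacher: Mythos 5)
Your proof is correct, but it takes a genuinely different route from the paper's. The paper does not argue by induction. Instead, after applying Clifford's theorem and Lemma~\ref{eigen}\textbf{(c)}, it feeds the resulting weak $\widetilde\varepsilon$-approximate fixed point for $\mathcal{N}|_{\mathcal{V}_i}$ into Corollary~\ref{lemma0} to conclude that $\mathcal{N}|_{\mathcal{V}_i}$ has a common eigenvector; since $\mathcal{V}_i$ is an $\mathcal{N}$-homogeneous component, that forces the $\mathcal{N}$-irreducible constituents to be one-dimensional, hence $\mathcal{N}$ acts by scalars on every block. At that point $\mathcal{G}$ is block-monomial, and each stabilizer block is $\mathcal{G}_{ii}=\mathbb{T}\mathcal{H}_0$ with $\mathcal{H}_0=\{G|_{\mathcal{V}_i}\mid G\in\mathcal{G}_0,\,G\mathcal{V}_i\subseteq\mathcal{V}_i\}$ \emph{finite}; the paper then finishes by citing Theorem~\ref{monomial group} when $\widetilde n=1$ and Theorem~\ref{general finite group} applied to $\mathcal{H}_0$ when $\widetilde n>1$, with no recursion. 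Your argument reaches the same conclusion by minimality over $n$: you dispose of the scalar-$\mathcal{N}$ case, the monomial case $\widetilde n=1$, and the $r=1$ case (via a direct use of Proposition~\ref{connected group} on the $\mathcal{N}$-irreducible constituents), and then recurse on $\mathcal{G}_{ii}$ in the remaining case, which requires verifying that $\mathcal{G}_{ii}$ inherits both the factorization ``finite times normal connected'' and $\mathbb{T}$-homogeneity, and then lifting reducibility of $\mathcal{G}_{ii}$ back to $\mathcal{G}$ by summing translates $\sum_G GA$. That extra bookkeeping is the price of avoiding the structural observation that $\mathcal{N}$ becomes scalar on blocks; what it buys is a more self-contained, classically Clifford-theoretic induction that does not rely on Corollary~\ref{lemma0} and treats the inductive step uniformly. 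Your separate treatment of $r=1$ is exactly where the induction would otherwise stall, and your handling of it (contradicting irreducibility of an $\mathcal{N}$-constituent of dimension $\geqslant 2$ via Proposition~\ref{connected group}) is sound, as is the verification $r\varepsilon\leqslant \frac{1}{3600\,\widetilde n^{11}}$ needed for the recursion.
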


\begin{proof}
Towards a contradiction we assume $\mathcal{G}$ is irreducible. By Clifford's theorem $\mathcal{V}=\complex^n$ decomposes into a direct sum 
$$
\mathcal{V}=\mathcal{V}_1\oplus\mathcal{V}_2\oplus\cdots\oplus\mathcal{V}_r,
$$
of invariant subspaces for $\mathcal{N}$ maximal such that for all $i=1, \ldots,r$ the restrictions of $\mathcal{N}$ to $\mathcal{N}$-irreducible subspaces of $\mathcal{V}_i$ are spatially isomorphic. Observe that this definition makes each of the spaces $\mathcal{V}_i$ unique, and since they could have been assumed orthogonal to each other by the Remark of Section \ref{sec:decomp}, this decomposition is indeed orthogonal. Also, the spaces $\mathcal{V}_i$ are of equal dimension, necessarily $\widetilde{n}= \displaystyle\frac{n}{r}$, and all of their $\mathcal{N}$-irreducible subspaces are of equal dimension.
We also decompose $\xi$ (which is necessarily a weak $\varepsilon$-approximate fixed point for the connected group $\mathcal{N}$), denoting its components by $\xi_i$, and choose an $i\in\{1, \ldots, k\}$ by Lemma \ref{eigen}\textbf{(c)} such that $\displaystyle \widetilde{\xi}_i= \frac{\xi_i} {||\xi_i||} \neq0$ is a weak $\widetilde{\varepsilon}$-approximate fixed point for $\mathcal{N} |_{\mathcal{V}_i}$, where $\widetilde{\varepsilon}= \dfrac{n}{n_i} \varepsilon = r\varepsilon$. Thus, $\mathcal{N} |_{\mathcal{V}_i}$ has a common eigenvector by Corollary \ref{lemma0}, so that the dimension of an $\mathcal{N}$-irreducible subspace in $\mathcal{V}_i$ (and consequently everywhere) is one. It follows that $\mathcal{N} |_{\mathcal{V}_i}$ {consists of scalar multiples of identity.} Returning to Clifford's theorem we observe that $\mathcal{G}$ is block-monomial with respect to this decomposition, and that groups $\mathcal{G}_{ii}$ viewed as groups of linear mappings from $\mathcal{V}_i$ to itself are irreducible. It is well-known (from, say, \cite[Chapitre 2.6]{Ser}) that the fact that our group is irreducible and unitary forces this decomposition to be automatically orthogonal.

We will now show that both possibilities $\widetilde{n}=1$ and $\widetilde{n}>1$ contradict the assumption that $\mathcal{G}$ is irreducible.

If $\widetilde{n}=1$, then $\mathcal{G}$ is a monomial unitary group having a weak $\varepsilon$-fixed point and is therefore reducible by Theorem \ref{monomial group}.
Now assume that $\widetilde{n}>1$.  Then by Lemma \ref{eigen} there exists an $i$ such that $\widetilde{\xi_i}=\dfrac{\xi_i}{||\xi_i||}$ is a weak $\varepsilon_i$-fixed point for the irreducible group $\mathcal{H}=\mathcal{G}_{ii}$, where $\varepsilon_i<r\varepsilon$.  Note that the group $\mathcal{H}_0=\{G|_{\mathcal{V}_i}\,|\, G\in\mathcal{G}_0, G(\mathcal{V}_i)\subseteq\mathcal{V}_i\}$ is finite and that $\mathcal{H}=\mathbb{T}\mathcal{H}_0$.    Since $\mathcal{H}$ is irreducible, we have that $\mathcal{H}_0$ is also irreducible.  But this contradicts Theorem \ref{general finite group} (since $\mathcal{H}_0$ has a weak $\varepsilon_i$-approximate fixed point).
\end{proof}

\begin{theorem}\label{general group}
Let $n\in\mathbb{N}$ and $\varepsilon>0$ be given. If $n\geqslant 2$ and $\displaystyle\varepsilon<\frac{1}{3600 n^{11}}$, then every compact unitary group $\mathcal{G}\subseteq M_n(\complex)$ with a weak $\varepsilon$-approximate fixed point is reducible.
\end{theorem}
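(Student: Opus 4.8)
The plan is to reduce Theorem~\ref{general group} to Lemma~\ref{lemma2}: that lemma already handles compact unitary groups of the form $\mathcal{G}_0\mathcal{N}$ with $\mathcal{G}_0$ finite and $\mathcal{N}$ connected normal, so the whole task is to produce such a decomposition for an arbitrary compact unitary $\mathcal{G}$. First I would normalize by replacing $\mathcal{G}$ with $\mathbb{T}\mathcal{G}=\{zG:z\in\mathbb{T},\ G\in\mathcal{G}\}$. This is again compact (a continuous image of $\mathbb{T}\times\mathcal{G}$) and unitary, has exactly the same invariant subspaces as $\mathcal{G}$ since scalars fix every subspace, and still admits $\xi$ as a weak $\varepsilon$-approximate fixed point because $|\langle zG\xi,\xi\rangle|=|\langle G\xi,\xi\rangle|$. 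So there is no loss in assuming $\mathcal{G}=\overline{\mathbb{T}\mathcal{G}}$. Being a closed subgroup of $U(n)$, $\mathcal{G}$ is a compact Lie group, hence its identity component $\mathcal{N}:=\mathcal{G}^{\circ}$ is a closed connected normal subgroup of finite index.

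The substantive step is to find a finite subgroup $\mathcal{G}_0\le\mathcal{G}$ with $\mathcal{G}=\mathcal{G}_0\mathcal{N}$. Here I would invoke (or prove along the following lines) the standard fact that every compact Lie group $G$ contains a finite subgroup meeting every connected component. Concretely, fix a maximal torus $T\le\mathcal{N}$; since $\mathcal{N}$ is normal, conjugation by any $G\in\mathcal{G}$ carries $T$ to a maximal torus of $\mathcal{N}$, and any two maximal tori of $\mathcal{N}$ are $\mathcal{N}$-conjugate, so every component of $\mathcal{G}$ meets $N_{\mathcal{G}}(T)$ and therefore $\mathcal{G}=\mathcal{N}\,N_{\mathcal{G}}(T)$. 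It thus suffices to complement the identity component of $N_{\mathcal{G}}(T)$ by a finite subgroup; but that identity component equals $T$, so $N_{\mathcal{G}}(T)$ is an extension of the finite group $Q:=N_{\mathcal{G}}(T)/T$ by the torus $T$, its class in $H^{2}(Q,T)$ is killed by $|Q|$ and hence is the image of a class in $H^{2}(Q,\,T[|Q|])$ under the inclusion of the finite $|Q|$-torsion subgroup $T[|Q|]\hookrightarrow T$, and the corresponding finite extension of $Q$ by $T[|Q|]$ embeds into $N_{\mathcal{G}}(T)$ as the sought finite subgroup $\mathcal{G}_0$.

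Once $\mathcal{G}=\mathcal{G}_0\mathcal{N}$ is established, all hypotheses of Lemma~\ref{lemma2} hold — $\mathcal{N}$ is connected normal, $\mathcal{G}_0$ is finite, $\mathcal{G}=\overline{\mathbb{T}\mathcal{G}}$, $n\ge 2$, $\varepsilon<\frac{1}{3600n^{11}}$ (so in particular $\varepsilon\le\frac{1}{3600n^{11}}$), and $\xi$ is a weak $\varepsilon$-approximate fixed point — so that lemma immediately yields the reducibility of $\mathcal{G}$, which is exactly the assertion. I expect the production of the finite subgroup $\mathcal{G}_0$ to be the only genuine obstacle; everything else is routine bookkeeping that the normalizations preserve compactness, unitarity, and the weak-approximate-fixed-point inequality.

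If one prefers to avoid the Lie-theoretic structure fact altogether, there is an alternative route that reruns the Clifford-theoretic argument inside the proof of Lemma~\ref{lemma2} directly for $\mathcal{G}$: Corollary~\ref{lemma0} forces $\mathcal{N}$ to act by scalars on each homogeneous component $\mathcal{V}_i$ of $\complex^n$; the one-dimensional case makes $\mathcal{G}$ monomial and is settled by Theorem~\ref{monomial group}; and when a surviving block $\mathcal{G}_{ii}$ has dimension $n_i\ge 2$, its identity component is the scalar torus $\mathbb{T}I_{n_i}$, so $\mathcal{G}_{ii}\cap SU(n_i)$ is a finite irreducible unitary group carrying the inherited weak $\widetilde\varepsilon$-approximate fixed point with $\widetilde\varepsilon<\frac{1}{3600\,n_i^{11}}$, contradicting Theorem~\ref{general finite group}. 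Either way the conclusion is the same, and I would present the first route as the main argument with this one as a remark.
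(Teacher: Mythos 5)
Your main route is exactly the paper's: normalize to $\mathcal{G}=\overline{\mathbb{T}\mathcal{G}}$, take $\mathcal{N}$ to be the identity component, obtain a finite subgroup $\mathcal{G}_0$ with $\mathcal{G}=\mathcal{G}_0\mathcal{N}$, and hand off to Lemma~\ref{lemma2}. The only difference is that you supply a proof of the structure fact (Frattini argument via a maximal torus of $\mathcal{N}$, then an $H^2$ argument to pull a finite complement out of $N_{\mathcal{G}}(T)$), whereas the paper simply cites \cite{BerGurMas}; your argument is correct and is the standard one. The alternative route you sketch at the end is a genuine variation worth recording: instead of producing $\mathcal{G}_0$ in advance, you observe that once Clifford theory and Corollary~\ref{lemma0} force $\mathcal{G}_{ii}^{\circ}=\mathbb{T}I_{n_i}$, the subgroup $\mathcal{G}_{ii}\cap SU(n_i)$ is automatically finite (its identity component lies in $\mathbb{T}I_{n_i}\cap SU(n_i)$, which is finite) and irreducible with the inherited weak $\widetilde\varepsilon$-approximate fixed point, contradicting Theorem~\ref{general finite group}. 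That removes the dependence on the structure theorem entirely and would let one merge Lemma~\ref{lemma2} with Theorem~\ref{general group}, at the cost of redoing the Clifford bookkeeping inside this proof.
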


 \begin{proof}
   We assume with no loss of generality that $\mathcal{G}= \overline{\mathbb{T}\mathcal{G}}$ and denote by $\mathcal{N}$ the connected component of the identity. {It is well known that there is a finite group $\mathcal{G}_0$ such that $\mathcal{G}=\mathcal{G}_0 \mathcal{N}$ \cite{BerGurMas}, and hence we are done by Lemma \ref{lemma2}}.
 \end{proof}

\section{Common eigenvectors}\label{sec:eigenvec}

In this section we present the main results of the paper. Let $\mathcal{G}$ be a group of unitary $n\times n$ matrices with a weak $\varepsilon$-approximate point $\xi$, with $\|\xi\| = 1$ and $0<\varepsilon<1$ as assumed throughout the paper. The following theorem follows easily from Theorem \ref{general group} and Lemma \ref{eigen}.
\begin{theorem}\label{common}
  Let $n\geqslant 2$ and $\displaystyle0<\varepsilon<\frac{1}{3600 n^{11}}$.  If a group $\mathcal{G}$ of $n\times n$ unitary matrices has a weak $\varepsilon$-approximate point, then $\mathcal{G}$ has a common eigenvector.
\end{theorem}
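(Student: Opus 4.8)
The plan is to reduce Theorem \ref{common} to Theorem \ref{general group} plus Lemma \ref{eigen}, exactly as the excerpt advertises. First I would observe that a group $\mathcal{G}$ of $n\times n$ unitary matrices is always \emph{bounded}, hence its closure $\overline{\mathcal{G}}$ in $M_n(\complex)$ is a compact group of unitary matrices (the unitary group $U(n)$ is closed, so the closure stays unitary). The weak $\varepsilon$-approximate fixed point condition $|\langle G\xi,\xi\rangle|\geqslant 1-\varepsilon$ is a closed condition in $G$, so $\xi$ remains a weak $\varepsilon$-approximate fixed point for $\overline{\mathcal{G}}$; moreover any common eigenvector of $\overline{\mathcal{G}}$ is automatically a common eigenvector of $\mathcal{G}$. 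Thus it suffices to treat compact unitary groups.

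Next, apply Theorem \ref{general group} to $\overline{\mathcal{G}}$: since $n\geqslant 2$ and $0<\varepsilon<\frac{1}{3600 n^{11}}$, the group $\overline{\mathcal{G}}$ is reducible, i.e.\ it has a nontrivial invariant subspace. Now I would run the standard ``descend into a block'' argument: decompose $\complex^n$ orthogonally into invariant subspaces (using the Remark of Section \ref{sec:decomp} to make the decomposition orthogonal, since $\overline{\mathcal{G}}$ is unitary), decompose $\xi$ accordingly into components $\xi_i$, and invoke Lemma \ref{eigen}\textbf{(c)} to select an index $i$ for which $\widetilde{\xi}_i=\xi_i/\|\xi_i\|$ is a weak $\widetilde{\varepsilon}$-approximate fixed point for $\mathcal{G}_{ii}$ with $\widetilde{\varepsilon}=\frac{n}{n_i}\varepsilon$. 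Since $n_i\geqslant 1$ one checks $\widetilde{\varepsilon}=\frac{n}{n_i}\varepsilon\leqslant n\varepsilon<\frac{1}{3600 n^{10}}\leqslant\frac{1}{3600 n_i^{11}}$, so the hypotheses of Theorem \ref{general group} hold again for $\mathcal{G}_{ii}$ on $\complex^{n_i}$ (when $n_i\geqslant 2$); iterate. The dimension strictly decreases at each step (a proper invariant subspace has $n_i<n$), so after finitely many steps we reach a block acting on a one-dimensional space, whose unit vector, padded with zeros in the other coordinates, is a common eigenvector for $\overline{\mathcal{G}}$ and hence for $\mathcal{G}$. The one subtlety is the base of the induction: once some block has $n_i=1$ we stop immediately; if a block has $n_i\geqslant 2$ it is still reducible by Theorem \ref{general group}, so the process cannot stall.

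I expect essentially no serious obstacle here — this is a packaging argument — but the one point that needs care is making sure the $\varepsilon$-bound is preserved down the induction and that the bookkeeping of ``common eigenvector of a block $\Rightarrow$ common eigenvector of the whole group'' is correct. Concretely: if $\widehat{\zeta}$ has $i$-th component a common eigenvector $\zeta$ of $\mathcal{G}_{ii}$ and zero elsewhere, then for $G\in\overline{\mathcal{G}}$ preserving $\mathcal{V}_i$ we get $G\widehat{\zeta}$ a scalar multiple of $\widehat{\zeta}$; but one must also note that because the decomposition is into invariant subspaces, \emph{every} $G$ preserves $\mathcal{V}_i$ (the subspaces in the decomposition of a reducible unitary group into invariant blocks are individually invariant, not merely permuted), so $\widehat{\zeta}$ is genuinely a common eigenvector. (Alternatively, and more cleanly, one fixes at the outset a \emph{minimal} nonzero invariant subspace $\mathcal{W}$ of $\overline{\mathcal{G}}$; then $\overline{\mathcal{G}}|_{\mathcal{W}}$ is irreducible, and after rescaling the corresponding component of $\xi$ one has a weak $\widetilde{\varepsilon}$-approximate fixed point with $\widetilde{\varepsilon}\leqslant n\varepsilon<\frac{1}{3600(\dim\mathcal{W})^{11}}$ as long as $\dim\mathcal{W}\geqslant 2$ — but then Theorem \ref{general group} would force $\overline{\mathcal{G}}|_{\mathcal{W}}$ reducible, contradicting minimality; hence $\dim\mathcal{W}=1$ and any unit vector of $\mathcal{W}$ is the desired common eigenvector.) This minimal-subspace phrasing avoids the induction entirely and is the route I would actually write up.
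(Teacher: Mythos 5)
Your iterative version is sound in spirit, but the ``cleaner'' minimal-subspace phrasing you say you would actually write up has a genuine gap. If you fix an \emph{arbitrary} minimal nonzero invariant subspace $\mathcal{W}$ of $\overline{\mathcal{G}}$ at the outset, nothing guarantees that the component $\xi_\mathcal{W}$ of $\xi$ is large; it could even be zero, and when $\|\xi_\mathcal{W}\|$ is small the rescaled parameter from Lemma \ref{eigen}\textbf{(a)} is $\varepsilon/\|\xi_\mathcal{W}\|^2$, which need not be bounded by $n\varepsilon$. The whole point of Lemma \ref{eigen}\textbf{(b)}--\textbf{(c)} is to \emph{select} a block $i$ with $\|\xi_i\|^2\geqslant n_i/n$, and you cannot dispense with that selection by fixing $\mathcal{W}$ beforehand. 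The paper instead decomposes $\complex^n$ into irreducible blocks all at once (by unitary similarity), then applies Lemma \ref{eigen}\textbf{(c)} to pick the right block $i$, and concludes immediately: since $\mathcal{G}_i$ is irreducible and $\widetilde{\varepsilon}<\frac{1}{3600 n_i^{11}}$, Theorem \ref{general group} forces $n_i=1$, with no induction needed. That is both correct and shorter than your iterative descent.

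There is also a small arithmetic slip in your inequality chain. You write $\frac{n}{n_i}\varepsilon\leqslant n\varepsilon<\frac{1}{3600n^{10}}\leqslant\frac{1}{3600n_i^{11}}$, but the last step requires $n_i^{11}\leqslant n^{10}$, which fails e.g.\ for $n=10$, $n_i=9$. The correct chain keeps the $1/n_i$ factor until the end:
\[
\widetilde{\varepsilon}=\frac{n}{n_i}\varepsilon<\frac{n}{n_i}\cdot\frac{1}{3600 n^{11}}=\frac{1}{3600 n_i n^{10}}\leqslant\frac{1}{3600 n_i^{11}},
\]
where the last inequality holds because $n_i\leqslant n$. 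The same slip infects the $n\varepsilon<\frac{1}{3600(\dim\mathcal{W})^{11}}$ step in your minimal-subspace paragraph. These are easy to repair, but the selection issue above is the substantive one: you must use Lemma \ref{eigen}\textbf{(c)} to choose the block, not fix a minimal invariant subspace in advance.
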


\begin{proof}
  Assume, using a unitary similarity if necessary, that $\mathcal{G}$ is block diagonal with irreducible blocks. For the corresponding decomposition we recall the notation of Section \ref{sec:decomp}. By Lemma \ref{eigen}\textbf{(c)} we choose an $i\in\{1, \ldots, k\}$ such that $\displaystyle \widetilde{\xi}_i= \frac{\xi_i} {||\xi_i||} \neq0$ is a weak $\widetilde{\varepsilon}$-approximate fixed point for $\mathcal{G}_i$, where $\displaystyle $.
   \[
   \widetilde{\varepsilon} = \frac{n}{n_i}\varepsilon \leqslant \frac{1}{3600 n_i^{11}}.
   \]
  In case $n_i\geqslant 2$, this would mean that $\mathcal{G}_i$ is reducible by Theorem \ref{general group} which is impossible.  Hence $n_i=1$ and therefore $\xi_i$ is a common eigenvector for $\mathcal{G}$.
\end{proof}

\begin{lemma}\label{last_lemma}
  Let  $x>y>0$ , $\displaystyle\alpha\in \left[\frac{2\pi}{3}, \frac{4\pi}{3}\right]$, and let $\xi=e^{\alpha i}$.  Then $|x+\xi y| \leqslant x$.
\end{lemma}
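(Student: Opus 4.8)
The plan is to prove the elementary inequality $|x+\xi y|\leqslant x$ for $\xi=e^{\alpha i}$ with $\alpha\in[2\pi/3,4\pi/3]$ and $x>y>0$ by a direct computation of $|x+\xi y|^2$ and comparison with $x^2$. First I would expand
\[
    |x+\xi y|^2 = x^2 + 2xy\cos\alpha + y^2,
\]
using $\mathrm{Re}(\xi)=\cos\alpha$ and $|\xi|=1$. The claimed inequality $|x+\xi y|\leqslant x$ is then equivalent (both sides being nonnegative) to $2xy\cos\alpha + y^2\leqslant 0$, i.e.\ to $y + 2x\cos\alpha\leqslant 0$ after dividing by $y>0$, i.e.\ to $\cos\alpha\leqslant -\dfrac{y}{2x}$.

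Next I would use the hypotheses on $\alpha$ and on $x,y$. On the interval $[2\pi/3,4\pi/3]$ the cosine attains its maximum at the endpoints, where $\cos(2\pi/3)=\cos(4\pi/3)=-\tfrac12$; hence $\cos\alpha\leqslant-\tfrac12$ for all such $\alpha$. Since $0<y<x$ we have $\dfrac{y}{2x}<\dfrac12$, so $-\dfrac{y}{2x}>-\dfrac12\geqslant\cos\alpha$, which is exactly the inequality needed. Tracing back, $2xy\cos\alpha+y^2\leqslant -xy + y^2 = y(y-x) <0$, so $|x+\xi y|^2\leqslant x^2$ and therefore $|x+\xi y|\leqslant x$, as desired.

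There is essentially no obstacle here: the only point requiring a word of justification is the monotonicity/endpoint behaviour of $\cos$ on $[2\pi/3,4\pi/3]$, which is immediate since $\cos$ is decreasing on $[0,\pi]$ and increasing on $[\pi,2\pi]$, so its maximum over $[2\pi/3,4\pi/3]$ is $\max\{\cos(2\pi/3),\cos(4\pi/3)\}=-\tfrac12$. One could alternatively note that the strict inequality $y<x$ gives strict inequality in the final bound when $\alpha$ is an endpoint as well, but the statement only claims $\leqslant$, so the computation above suffices exactly as written.
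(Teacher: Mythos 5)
Your computation is correct. You expand $|x+\xi y|^2 = x^2 + 2xy\cos\alpha + y^2$, reduce the claim to $\cos\alpha \leqslant -\tfrac{y}{2x}$, and conclude from $\cos\alpha \leqslant -\tfrac12$ (on the given interval) and $\tfrac{y}{2x} < \tfrac12$. This is a clean, purely algebraic argument. The paper instead reasons geometrically: after normalizing $\alpha\in[\pi,4\pi/3]$ via conjugation, it regards $0$, $x$, and $-\xi y$ as vertices of a triangle with side lengths $x$, $y$, $z=|x+\xi y|$, observes that the angle opposite $z$ is $\alpha-\pi\leqslant\pi/3$, and concludes that $z$ cannot be the longest side, hence $z\leqslant\max(x,y)=x$. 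The two proofs are essentially the same computation viewed from different angles — your cosine bound is exactly the law-of-cosines input to the paper's triangle picture — but yours is more self-contained and spells out every step explicitly, whereas the paper's is shorter and relies on the Euclidean fact that the longest side of a triangle is opposite the largest angle. Either is perfectly acceptable here.
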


 \begin{proof}
   With no loss assume that $\displaystyle \alpha\in \left[\pi, \frac{4\pi}{3} \right]$ (otherwise replace $\xi$ by $\overline{\xi}$).   Consider the triangle in $\complex$ with vertices $0$, $x$ and $-\xi y$.  The lengths of its sides are $x,y,$ and $z=|x+\xi y|$.  Since the angle opposite $z$ is $\displaystyle\alpha-\pi\leqslant\frac{\pi}{3}$, we have that $z$ cannot be the longest side of the triangle.
 \end{proof}

\begin{theorem}\label{main}
  Let $n\geqslant 2$, $\displaystyle0<\varepsilon<\frac{1}{3600 n^{11}}$, and let a group of $n\times n$ unitary matrices $\mathcal{G}$ have a weak $\varepsilon$-approximate fixed point $\xi$.  Then there exists a common eigenvector $\eta$ such that $||\xi-\eta||^2<3600 n^{11} \varepsilon$.
\end{theorem}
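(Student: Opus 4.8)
The plan is to reduce Theorem~\ref{main} to Theorem~\ref{general group} together with the quantitative estimates already developed along the way, and then track how the constant $3600 n^{11}$ propagates through a block decomposition. First I would put $\mathcal{G}$, after a unitary similarity, into block-diagonal form with irreducible blocks $\mathcal{G}_1,\ldots,\mathcal{G}_k$ acting on $\mathcal{V}_1,\ldots,\mathcal{V}_k$ of dimensions $n_1,\ldots,n_k$, and decompose $\xi=\xi_1\oplus\cdots\oplus\xi_k$. By Lemma~\ref{eigen}\textbf{(b)} there is an index $i$ with $\|\xi_i\|^2\ge n_i/n$, and by Lemma~\ref{eigen}\textbf{(a)} the normalized component $\widetilde\xi_i=\xi_i/\|\xi_i\|$ is a weak $\widetilde\varepsilon$-approximate fixed point of $\mathcal{G}_i$ with $\widetilde\varepsilon=\varepsilon/\|\xi_i\|^2\le (n/n_i)\varepsilon$.

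The key dichotomy is on $n_i$. If $n_i\ge 2$, then $\widetilde\varepsilon\le (n/n_i)\varepsilon< \tfrac{1}{3600 n_i n^{10}}\le \tfrac{1}{3600 n_i^{11}}$, so Theorem~\ref{general group} forces $\mathcal{G}_i$ to be reducible, contradicting irreducibility of the block. Hence $n_i=1$, and then $\widetilde\xi_i$ (a unit vector in $\mathcal{V}_i=\complex^1$) is literally a common eigenvector for $\mathcal{G}_i$, so the vector $\eta$ of $\complex^n$ which is $\widetilde\xi_i$ on $\mathcal{V}_i$ and $0$ elsewhere is a common eigenvector of $\mathcal{G}$. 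This part is exactly the content of Theorem~\ref{common}; the genuinely new work in Theorem~\ref{main} is the distance estimate $\|\xi-\eta\|^2<3600 n^{11}\varepsilon$.

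For the distance bound I would compute directly: $\|\xi-\eta\|^2=\|\xi_i-\widetilde\xi_i\|^2+\sum_{j\ne i}\|\xi_j\|^2 = (\|\xi_i\|-1)^2+(1-\|\xi_i\|^2) = 2(1-\|\xi_i\|)$. So it suffices to show $1-\|\xi_i\|$ is at most roughly $1800 n^{11}\varepsilon$; equivalently, $\|\xi_i\|$ must be forced very close to $1$. The mechanism is that $\widetilde\xi_i$ being a weak $\widetilde\varepsilon$-approximate fixed point of the \emph{one-dimensional} group $\mathcal{G}_i$ means $\mathcal{G}_i$ acts by scalars $\lambda_G\in\mathbb{T}$ on $\mathcal{V}_i$. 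For any $G\in\mathcal{G}$ with $G(\mathcal{V}_i)\subseteq\mathcal{V}_i$ and such that the associated permutation of the remaining blocks moves block $j$ to block $\pi_j\ne i$, the computation in the proof of Lemma~\ref{eigen}\textbf{(a)} gives $|\langle G\xi,\xi\rangle|\le |\langle H\xi_i,\xi_i\rangle| + (1-\|\xi_i\|^2) = \|\xi_i\|^2 + 1 - \|\xi_i\|^2$ only if such a $G$ exists — but if $\mathcal{V}_i$ is a full block of the block-\emph{diagonal} presentation there is no monomial mixing, so one must instead exploit the weak approximate fixed point inequality for $\mathcal{G}$ directly on the diagonal block $G=\mathrm{diag}(G_1,\ldots,G_k)$. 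Here $1-\varepsilon\le|\langle G\xi,\xi\rangle| = |\sum_j \langle G_j\xi_j,\xi_j\rangle| \le \sum_j\|\xi_j\|^2 = 1$, which is no information; the right move is to use Lemma~\ref{last_lemma}. Choosing $G_i$ to act as a scalar $e^{\alpha i}$ with $\alpha\in[\tfrac{2\pi}{3},\tfrac{4\pi}{3}]$ on $\mathcal{V}_i$ (possible unless the block's scalars all lie in a narrow arc, which we can also arrange does not cost much) and bounding the contribution of the other blocks by $1-\|\xi_i\|^2$, Lemma~\ref{last_lemma} yields $|\langle G\xi,\xi\rangle|\le (1-\|\xi_i\|^2)+(\text{something}<\|\xi_i\|^2)$, forcing a gap that, combined with $|\langle G\xi,\xi\rangle|\ge 1-\varepsilon$, pins $1-\|\xi_i\|$ down linearly in $\varepsilon$ and $n$.

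The main obstacle I anticipate is organizing this last estimate cleanly: one has to handle the case where no element of $\mathcal{G}$ restricts to a scalar on $\mathcal{V}_i$ in the ``far from $1$'' range needed for Lemma~\ref{last_lemma}, in which case the block $\mathcal{G}_i$ is almost trivial and $\widetilde\xi_i$ is itself within the desired distance of $\xi$ by a more elementary argument, and then combine the two bounds so that the worst-case constant is still below $3600 n^{11}$. Concretely, I would first establish $\|\xi-\eta\|^2 = 2(1-\|\xi_i\|)$, then show $1-\|\xi_i\|^2 \le 1800 n^{11}\varepsilon$ (hence $1-\|\xi_i\|\le 1800 n^{11}\varepsilon$ since $\|\xi_i\|\le 1$), using the weak approximate fixed point condition applied to a carefully chosen group element together with Lemma~\ref{last_lemma}, and finally multiply through to reach $\|\xi-\eta\|^2 = 2(1-\|\xi_i\|)\le 3600 n^{11}\varepsilon$. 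All the heavy lifting about irreducibility has already been done in Theorem~\ref{general group}, so the remaining work is entirely in squeezing out these constants.
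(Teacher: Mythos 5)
The gap is in the distance estimate, and it is not a matter of organizing details: the eigenvector you propose is the wrong one, and the quantity you try to bound is not small.

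You take $\eta$ to be supported on a \emph{single} one‑dimensional block $\mathcal{V}_i$ (the one given by Lemma~\ref{eigen}\textbf{(b)}), and you reduce the problem to showing that $\|\xi_i\|$ is close to $1$. But Lemma~\ref{eigen}\textbf{(b)} only gives $\|\xi_i\|^2\ge n_i/n=1/n$, and nothing forces $\|\xi_i\|$ to be near $1$. Indeed, take $\mathcal{G}=\{I\}$ (or $\mathcal{G}=\mathbb{T}I$) on $\complex^2$ and $\xi=\frac{1}{\sqrt 2}(1,1)$: every unit vector is an exact common eigenvector, so the theorem is satisfiable with $\eta=\xi$ and $\varepsilon$ arbitrarily small, yet your $\eta$ is one of the standard basis vectors and $\|\xi-\eta\|^2=2-\sqrt2$, violating the bound $3600n^{11}\varepsilon$ as $\varepsilon\to0$. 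The reason Lemma~\ref{last_lemma} cannot ``pin $1-\|\xi_i\|$ down'' is that there may be many $1\times 1$ blocks on which $\mathcal{G}$ acts by exactly the same scalar as on $\mathcal{V}_i$; no choice of $G\in\mathcal{G}$ gives leverage against such blocks, and $\xi$ can distribute its mass freely among them.

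The paper's proof therefore builds $\eta$ out of \emph{all} the $1\times 1$ blocks on which $\mathcal{G}$ acts by the same character: it groups the blocks so that blocks $1,\dots,r$ are one-dimensional, orders $a_j=\|\xi_j\|^2$ decreasingly, uses Lemma~\ref{last_lemma} only to conclude that any $1\times 1$ block $j$ on which $\mathcal{G}$ acts differently from block $1$ satisfies $a_j<\varepsilon$, and controls the remaining (non-eigenvector) block by the contrapositive of Theorem~\ref{common}, giving $\|\xi_k\|^2\le 3600(n-r)^{11}\varepsilon$. Then $\eta=\xi_1+\dots+\xi_s$ (where $s$ is the last index with $a_s\ge\varepsilon$) is a common eigenvector, and $\|\xi-\eta\|^2=\sum_{j>s}\|\xi_j\|^2\le (r-1)\varepsilon+3600(n-r)^{11}\varepsilon<3600n^{11}\varepsilon$. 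Your first two steps (the reduction to a $1\times 1$ block and the irreducibility dichotomy) agree with Theorem~\ref{common}, but the estimate $\|\xi-\eta\|^2=2(1-\|\xi_i\|)$ with a single-block $\eta$ cannot be salvaged; you must allow $\eta$ to combine the $\mathcal{G}$‑equivalent scalar blocks.
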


\begin{proof}
  With no loss we assume that $\mathcal{G}=\overline{\mathbb{T}\mathcal{G}}$.  By Theorem \ref{common} $\mathcal{G}$ has a common eigenvector. Let $r$ be the greatest number of linearly independent eigenvectors of $\mathcal{G}$.  With no loss of generality we assume that $\mathcal{G}$ is block diagonal with $k=r+1$ blocks, where the first $r$ blocks are $1\times 1$ and the $k$-th block has no common eigenvector (or that the $k$-th block is of size $0\times 0$). For this decomposition recall the notation of Section \ref{sec:decomp}. If $\xi_k=0$, then $\xi$ itself is a common eigenvector and we are done. If $\xi_k\not=0$, then $\displaystyle \tilde{\xi}_k=\frac{\xi_k}{||\xi_k||}$ is a weak $\varepsilon_k$-approximate fixed point for $\mathcal{G}_k$. Hence $\displaystyle\varepsilon_k\geqslant \frac{1}{3600(n-r)^{11}}$ by Theorem \ref{common} and therefore $||\xi_k||^2\leqslant 3600(n-r)^{11} \varepsilon$.

For $a_i=\|\xi_i\|^2, i\in\{1,2,\ldots,r\},$ assume with no loss that $a_1\geqslant a_2\geqslant \ldots \geqslant a_r$. Let $G\in\mathcal{G}$, and suppose, if possible, that for some $i\in\{2,\ldots, r\}$ we have that $G_i\not= G_1$.  Let $m$ be an integer such that an argument $\alpha$ of $(G_i\overline{G_1})^m$ is between $\displaystyle \frac{2\pi}{3}$ and $\displaystyle \frac{4\pi}{3}$ and let $H= (\overline{G_1}G)^m$ so that $H_1=1$ and an argument of $H_i$ is between  $\displaystyle \frac{2\pi}{3}$ and $\displaystyle \frac{4\pi}{3}$. Using Lemma \ref{last_lemma} we then get
\begin{eqnarray*}
1-\varepsilon &<& |\langle H\xi,\xi\rangle| \leqslant |\langle H_1\xi_1,\xi_1\rangle +\langle H_i\xi_i,\xi_i\rangle|+\sum_{j\not\in\{1,i\}} ||\xi_j||^2 \\
&= & |a_1+H_i a_i| + (1-a_1-a_i) \le a_1+(1-a_1-a_i) = 1-a_i,
\end{eqnarray*}
and therefore $a_i<\varepsilon$.  Let $s$ be the largest integer such that $a_s\geqslant \varepsilon$ (clearly $s\geqslant 1$) and let $\eta=\xi_1 +\ldots +\xi_s$.  Now note that $\eta$ is a common eigenvector for $\mathcal{G}$ and that
\[||\xi-\eta||^2=\sum_{j=s+1}^k ||\xi_j||^2\leqslant (r-1)\varepsilon + 3600(n-r)^{11}\varepsilon < 3600 n^{11} \varepsilon.\]
\end{proof}

\vskip 5mm

{\bf Acknowledgement.}
The authors are thankful to Dragomir \v Z.\ Djokovi\' c for a discussion of the result presented in Lemma \ref{connected} and the reference given in its proof.






\begin{thebibliography}{1}
\bibitem{Bek-Harp-Val} B. Bekka, P. de la Harpe,  A. Valette, Kazhdan's Property (T),  Cambridge University Press  (2008). 
\bibitem{BerGurMas} J.~Bernik, R.~Guralnick, M.~Mastnak, \textsl{Reduction theorems for groups of matrices}, Linear Algebra Appl., \textbf{383} (2004), 119--126.


\bibitem{BerRad} J.~Bernik, H.~Radjavi, \textsl{How small can nonzero commutators be?} Indiana University Math. J., \textbf{54}, (2005), no. 2, 309--320.

\bibitem{Bou} N.\ Bourbaki, Groupes et alg\`{e}bres de Lie, Chapitre 9, Groupes de Lie r\'{e}els compacts, Springer-Verlag, Berlin (2007).

\bibitem{GAP} The GAP Group, GAP -- Groups, Algorithms, and Programming, Version 4.10.1; 2019. (https://www.gap-system.org)

\bibitem{HarLitPolI} G.H.\ Hardy, J.E.\ Littlewood, and G.\ P\'olya, \textsl{Some simple inequalities satisfied by convex functions}, Messenger of Math., \textbf{58} (1929), 145-152.

\bibitem{HarLitPolII} G.H.\ Hardy, J.E.\ Littlewood, and G.\ P\'olya, Inequalities, Cambridge University Press, Cambridge (1934).
\bibitem{Kazhd} D. Kazhdan. \textsl{Connection of the dual space of a group with the
structure of its closed subgroups}. Funct. Anal. Appl., \textbf{1}, (1967), 63--65.
\bibitem{KuMaOmRa} B.\ Kuzma, M.\ Mastnak, M.\ Omladi\v{c}, and H.\ Radjavi, \textsl{On approximate versions of reducibility results for matrix groups and semigroups}, Linear Algebra Appl., \textbf{577}, (2019), 41--52.

\bibitem{LieOBrShaTie} M.W.~Liebeck, E.A.~O'Brien, A.~Shalev, and P.H.~Tiep, \textsl{The Ore conjecture}, J.~Eur.~Math.~Soc., \textbf{12}, (2010), 939--1008.

\bibitem{MasRad} M.~Mastnak, H.~Radjavi, \textsl{Matrix semigroups whose ring commutators have real spectra are realizable}, Semigroup Forum, \textbf{95}, (2017), 51--65.


\bibitem{RadRos} H.\ Radjavi and P.\ Rosenthal, Simultaneous Triangularization, Springer-Verlag, New York, (2000).

\bibitem{Ser} J.-P.\ Serre, Repr\'{e}sentations lin\'{e}aires des groupes finis, Hermann, (1998).

\bibitem{Sup} D.\ Suprunenko, Soluble and Nilpotent Linear Groups, Translations of Mathematical Monographs, vol.\ 9, American Mathematical Society, Providence, Rhode Island, (1963).

\bibitem{TaYu} P.\ Tauvel and R.\ W.\ T.\ Yu: Lie algebras and Algebraic groups, Springer Monographs in Mathematics, Springer, New York, (2005).

\end{thebibliography}
\end{document}